\newtheorem{defi}{Definition}[section]
\newtheorem{thm}[defi]{Theorem}
\newtheorem{lemma}[defi]{Lemma}
\newtheorem{cor}[defi]{Corollary}
\newcommand{\re}{\operatorname{Re}}
\newcommand{\im}{\operatorname{Im}}
\newcommand{\rmi}{\operatorname{i}}
\newcommand{\rme}{\operatorname{e}}
\newcommand{\lcm}{\operatorname{lcm}}
\begin{document}

\title{CONSTRUCTING A POLYNOMIAL WHOSE NODAL SET\\
IS ANY PRESCRIBED KNOT OR LINK}

\author{
B.~Bode\footnote{benjamin.bode@bristol.ac.uk} and M.~R.~Dennis\footnote{mark.dennis@bristol.ac.uk}}

\address{H H Wills Physics Laboratory, University of Bristol, Bristol BS8 1TL, UK
}

\keywords{Knot, singularity, braid, applied topology}

\begin{abstract}We describe an algorithm that for every given braid $B$ explicitly constructs a function $f:\mathbb{C}^{2}\rightarrow\mathbb{C}$ such that $f$ is a polynomial in $u$, $v$ and $\overline{v}$ and the zero level set of $f$ on the unit three-sphere is the closure of $B$. The nature of this construction allows us to prove certain properties of the constructed polynomials. In particular, we provide bounds on the degree of $f$ in terms of braid data.
\end{abstract}

\maketitle

\section{Introduction}
\label{intro}

There are various settings in which links can arise in zero level sets of polynomials. Among these the most prominent one is the study of links of isolated singularities \cite{milnor:1968singular}. While only a very restricted class of links can be the link of an isolated singularity, for some types of polynomials every link is possible. For example for any link $L$ there exists a polynomial $f:\mathbb{R}^3\rightarrow\mathbb{C}$ in three real variables with complex coefficients such that $f^{-1}(0)$ is $L$. Even for these types explicit constructions are known for only very few examples.  
Dealing with explicit functions can be advantageous for studying some aspects of knot theory more closely. The critical points of the argument $\arg(f):S^{3}\backslash L\rightarrow S^{1}$ for example relate to circle-valued Morse and Novikov theory, which then connects to Reidemeister torsion and Seiberg-Witten invariants \cite{hl:1999circle}. Explicit polynomials are also necessary for applications in the construction of knotted field configurations in physics.
The goal of this paper is to describe a construction of such polynomials that works for any link.

Knots as vanishing sets of polynomials have been mostly studied in the context of isolated singularities of a polynomial $\mathbb{C}^{2}\rightarrow\mathbb{C}$. For all small $\varepsilon>0$ the intersection of the zero level set with the three-sphere of radius $\varepsilon$ around the singularity is ambient isotopic to the same link, the link of the singularity \cite{milnor:1968singular}. 
For the links which arise as links of isolated singularities of complex polynomials, certain iterated cables of torus links \cite{brauner:1928geometrie, burau:1933knoten, burau:1934verkettungen, le:1972noeuds, kahler:1929verzweigung, zariski:1932topology}, one can explicitly write down the corresponding polynomial. However, this procedure only covers a very restricted class of links. For a more detailed account of this topic we point the reader to \cite{milnor:1968singular, en:1985three}.

Similar considerations apply to polynomials $\mathbb{R}^{4}\rightarrow\mathbb{C}$ with (weakly) isolated singularities \cite{ak:1981all, milnor:1968singular}, but while some knots like the figure-eight knot have been explicitly constructed in this setting \cite{perron:1982noeud, rudolph:1987isolated, looijenga:1971note}, a constructive method that allows to generate polynomials with a zero level set of any given link explicitly is still missing.
Akbulut and King showed in \cite{ak:1981all} that all links arise as links of weakly isolated singularities of real polynomials. However, their proof does not offer an explicit construction of these functions either.

Knots have also been studied as real algebraic curves in $\mathbb{RP}^{3}$ in an area that is now known as real algebraic knot theory. The knots that arise in this way for polynomials of degree less than or equal to five or equal to six with genus at most one have been classified \cite{bjorklund:2011real,mo:2016real}.

It follows from results in real algebraic geometry \cite{bcr:1998real,ak:1981all} that for any link $L$ there exists a polynomial in three real variables with complex coefficients, whose vanishing set is $L$, but the proofs do not come with a method of finding these functions without using algebraic approximation, which is highly impractical in general. In particular, the degree of these polynomials could be arbitratily large.
The functions constructed by Brauner, Perron and Rudolph can be used to generate knots as such algebraic sets in $\mathbb{R}^{3}$ simply by applying a stereographic projection and multypling by a nowhere-vanishing common denominator. We are confronted with a similar situation as in the case of the result by Akbulut and King. While the existence of the polynomials on $\mathbb{R}^3$ vanishing on $L$ is known for any link $L$, explicit constructions are known for only very few of them. In this paper we describe a way of constructing such polynomials that works for any link.  

In fact, as in the previous constructions, we create a function which is defined on four-dimensional real space and is known to have the desired nodal set on $S^{3}$ and apply a stereographic projection to it. In order to do this, an isolated singularity at the origin is not necessary. All that is required is that the intersection of the zero level set of the function, which is defined on $\mathbb{R}^{4}$ (or $\mathbb{C}^{2}$ or $\mathbb{C}\times\mathbb{R}^{2}$), with the unit three-sphere is the desired knot or link.   

Rudolph introduced the notion of transverse $\mathbb{C}$-links \cite{rudolph:2005knot}. These are the links that arise as transverse intersections of zero level sets of complex polynomials $f:\mathbb{C}^{2}\rightarrow\mathbb{C}$ with the unit three-sphere. These functions do not necessarily have an isolated singularity at the origin and the zero level set on three-spheres of small radii does not have to be that same link.
The set of transverse $\mathbb{C}$-links is the same as the set of quasipositive links, which are closures of braids that are products of conjugates of positive Artin generators $\sigma_{i}$ of the braid group \cite{rudolph:2005knot, bo:2001quasi}. 

We weaken the condition on the functions that Rudolph studied to the effect that we are interested in complex-valued polynomials in complex variables $u$, $v$ and $\overline{v}$. We call these polynomials \emph{semiholomorphic}, since they are holomorphic in $u$, but not in $v$.

In \cite{us:2016lemniscate, dkjop:2010isolated} we described a construction of semiholomorphic polynomials with zero level sets on $S^{3}$ in the form of lemniscate knots, a family of knots generalising the family of torus links. The main ingredient of this procedure is that lemniscate knots are closures of braids that have a particularly simple parametrisation in terms of trigonometric functions.
In \cite{us:2016fivetwo} we showed using the example of the knot $5_{2}$ that by using geometric considerations one can find a similar trigonometric parametrisation for a braid even if its closure is not a lemniscate knot. However, this approach is not algorithmic and very time consuming.

In this paper we show that the idea behind the construction of lemniscate knots can be extended to all links.
\begin{thm}
\label{thmintro}
For every link $L$ there is a function $f:\mathbb{C}^{2}\rightarrow\mathbb{C}$ such that $f^{-1}(0)\cap S^{3}=L$ and $f$ is a polynomial in complex variables $u$, $v$ and $\overline{v}$, i.e. $f$ is a semiholomorphic polynomial.
\end{thm}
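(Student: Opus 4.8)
The plan is to proceed in three stages: represent $L$ as a braid closure, realise that braid by an explicit \emph{trigonometric} motion of points in the plane, and then convert that motion into a semiholomorphic polynomial whose nodal set on $S^{3}$ is the prescribed closure. By Alexander's theorem I may write $L=\widehat{B}$ for a braid $B$ on $n$ strands, given as a word $\sigma_{i_{1}}^{\varepsilon_{1}}\cdots\sigma_{i_{m}}^{\varepsilon_{m}}$ in the Artin generators. The object I aim to build is a loop of distinct points $z_{1}(t),\dots,z_{n}(t)\in\mathbb{C}$, $t\in[0,2\pi]$, whose union sweeps out $B$ and closes up, $\{z_{j}(2\pi)\}=\{z_{j}(0)\}$, and which in addition has the crucial property that the coefficients of
$g(\mathrm{e}^{\mathrm{i}t},v):=\prod_{j=1}^{n}(v-z_{j}(t))=v^{n}+\sum_{k=0}^{n-1}a_{k}(t)v^{k}$
are \emph{trigonometric polynomials} (finite Fourier series) in $t$.

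For a single generator the building block is explicit: two adjacent strands are exchanged by the antipodal motion $z_{\pm}(s)=c\pm\tfrac{1}{2}r\,\mathrm{e}^{\pm\mathrm{i}\pi s}$, whose pair product $(v-z_{+})(v-z_{-})=v^{2}-2cv+\bigl(c^{2}-\tfrac{1}{4}r^{2}\mathrm{e}^{\pm 2\mathrm{i}\pi s}\bigr)$ is already a trigonometric polynomial, the two roots staying distinct throughout and swapping as $s$ runs over $[0,1]$ (the sign of $\varepsilon$ being encoded in the direction of rotation). Concatenating one such move per letter of the braid word, with the remaining strands held at fixed real positions, realises $B$ by a piecewise-smooth loop of $n$ disjoint points, which I round at the junctions to obtain smooth $2\pi$-periodic symmetric functions $a_{k}(t)$. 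I would then replace each $a_{k}$ by a sufficiently high-order Fourier truncation $\tilde{a}_{k}$: since the strands are disjoint for every $t$, a $C^{0}$-small perturbation of the coefficients keeps the $n$ roots distinct and preserves the isotopy type of the geometric braid, so for high enough truncation order the polynomial $\tilde{g}(\mathrm{e}^{\mathrm{i}t},v)=v^{n}+\sum_{k<n}\tilde{a}_{k}(t)v^{k}$ still has $\widehat{B}=L$ as the closure of its root motion. \textbf{This step, turning the local, piecewise crossing data into a single genuinely finite Fourier parametrisation of the whole braid while keeping the roots under control, is the main obstacle}; the delicate points are the smoothing of the junctions and the quantitative claim that the Fourier error stays below the minimal strand separation.

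Finally I convert $\tilde{g}$ into a semiholomorphic polynomial. On $S^{3}=\{|u|^{2}+|v|^{2}=1\}$ one has $|u|=\sqrt{1-v\overline{v}}$, so writing $\mathrm{e}^{\mathrm{i}t}=u/\sqrt{1-v\overline{v}}$ and substituting into $\tilde{g}$ turns each Fourier mode $\mathrm{e}^{\mathrm{i}\ell t}$ into $u^{\ell}(1-v\overline{v})^{-\ell/2}$; note that multiplying $\tilde{g}$ by $\mathrm{e}^{\mathrm{i}Nt}$ beforehand, to make all frequencies nonnegative, does not alter the root set $\{z_{j}(t)\}$. Clearing the remaining denominators by a sufficiently high power of $(1-v\overline{v})$, and discarding an overall nowhere-zero factor $\sqrt{1-v\overline{v}}$ should a half-integer power survive, yields an honest polynomial $f$ in $u$, $v$ and $\overline{v}$ that is holomorphic in $u$, hence semiholomorphic, and introduces exactly the antiholomorphic dependence on $v$ that a general (non-algebraic) link requires. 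Because every factor I have multiplied in, namely $u^{N}$ and powers of $1-v\overline{v}$, is nonvanishing on $S^{3}\cap\{|v|<1\}$, the zero set of $f$ there is precisely $\{v=z_{j}(\arg u)\}$; scaling so that $|z_{j}(t)|<1$ keeps the braid strictly inside the solid torus $\{|v|<1\}$, and a short check that $f$ has no spurious zeros on $\{|v|=1\}$ (where $u=0$) then gives $f^{-1}(0)\cap S^{3}=\widehat{B}=L$. The degree bounds promised in the abstract should follow directly from the truncation order and the length $m$ of the braid word.
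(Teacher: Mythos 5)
Your first two stages are sound, and stage two is in fact a genuinely different route from the paper's: you propose to Fourier-truncate the elementary symmetric functions $a_{k}(t)$ of the strand positions directly, so that the coefficients are trigonometric polynomials by fiat and the root motion is preserved by continuity of roots (Rouch\'{e}, using the uniform strand separation). The paper instead parametrises the individual strands by trigonometric polynomials $F_{C}$, $G_{C}$ and then needs a combinatorial cancellation lemma (its Lemma \ref{poly}) to show that the expanded product has only integer frequencies; your truncation idea, suitably quantified, would be an acceptable substitute for that lemma. (A minor slip: as written, $z_{\pm}(s)=c\pm\tfrac{1}{2}r\,\mathrm{e}^{\pm\mathrm{i}\pi s}$ makes the two strands collide at $s=\tfrac{1}{2}$; you mean $c\pm\tfrac{1}{2}r\,\mathrm{e}^{\mathrm{i}\varepsilon\pi s}$, which is what your displayed pair product actually corresponds to.)

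The fatal gap is stage three. The substitution $\mathrm{e}^{\mathrm{i}t}=u/\sqrt{1-v\overline{v}}$ cannot be turned into a polynomial by clearing denominators. First, the frequencies $\ell$ occurring in your truncated coefficients have mixed parity in general, so after multiplying by $(1-v\overline{v})^{M}$ you are left with $P(u,v,\overline{v})+\sqrt{1-v\overline{v}}\,Q(u,v,\overline{v})$ with $P$ and $Q$ both nonzero; the square root is not an overall factor and cannot be discarded. Second, and independently of parity, $\sqrt{1-v\overline{v}}=|u|$ is \emph{not} nowhere-zero on $S^{3}$: it vanishes on the circle $\{u=0\}\cap S^{3}$. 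Every monomial of your cleared expression carries either a positive power of $u$ (when $\ell\geq 1$) or a positive power of $(1-v\overline{v})$ (when $\ell=0$), hence vanishes identically on that circle; your nodal set on $S^{3}$ would therefore be $L$ together with a spurious unknotted component, not $L$, and the ``short check'' you defer would in fact fail. The missing idea is the one the paper uses: put the roots in the holomorphic variable $u$ and the braid parameter in $\arg v$, so that the \emph{formal} replacement $\mathrm{e}^{\mathrm{i}t}\mapsto v$, $\mathrm{e}^{-\mathrm{i}t}\mapsto\overline{v}$ (exact on the torus $\mathbb{C}\times S^{1}$, with no denominators, and with negative frequencies harmlessly becoming $\overline{v}$, which semiholomorphy permits) yields a polynomial outright; then scale the roots by a small parameter $\lambda$ and prove, via uniform root-separation and monotonicity estimates together with the Isotopy Extension Theorem, that $f_{\lambda}^{-1}(0)\cap S^{3}$ is ambient isotopic to $f_{\lambda}^{-1}(0)\cap(\mathbb{C}\times S^{1})=\widehat{B}=L$ for $\lambda$ small. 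Some such scaling-and-isotopy argument, rather than exact algebra on $S^{3}$, is indispensable for passing from the torus to the three-sphere.
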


The proof provides an algorithm that for any given braid $B$ constructs a semiholomorphic polynomial with the closure of $B$ as its zero level set on the unit-three sphere. Note that stereographic projection from $S^{3}$ to $\mathbb{R}^{3}$ then allows us to turn the constructed polynomial $f:\mathbb{C}^{2}\rightarrow\mathbb{C}$ into a polynomial $\tilde{f}:\mathbb{R}^{3}\rightarrow\mathbb{C}$ with the desired shape of its nodal lines.

Having an explicit formalism to generate polynomials comes with the advantage of being able to show certain properties about them. We can for example give upper and lower bounds on the degree of the resulting polynomial $f$ in terms of braid data, the number of strands in each link component and the number of crossings between them. This result carries over to bounds on the degree of $\tilde{f}$, improving results associated to the Nash-Tognoli theorem \cite{bcr:1998real}, which do not contain any information about the degree. We prove the following theorem.
\begin{thm}
\label{thmintro2}
Let $B$ be a braid on $s$ strands of length $\ell$ and let $L$ denote its closure. Then there exists a function $f:\mathbb{C}^{2}\rightarrow\mathbb{C}$ such that
\begin{itemize}
\item $f^{-1}(0)\cap S^{3}=L$,
\item $f$ is a polynomial in $u$, $v$ and $\overline{v}$, i.e. it is semiholomorphic,
\item $f$ is harmonic, i.e. $(\partial_{u}\partial_{\overline{u}}+\partial_{v}\partial_{\overline{v}})f=0$,
\item the intersection of $f^{-1}(0)$ and $S^{3}$ is transverse and $0$ is a regular value of $f|_{S^{3}}$,
\item $\max\left\{s,c_{1}\right\}\leq \deg(f)\leq\max\left\{s,c_{2}\right\}$,
\end{itemize} 
where $c_{1}$ and $c_{2}$ are expressions involving the number of strands in each link component and the number of crossings between them and are defined in Eq. (\ref{eq:lower}) and Eq. (\ref{eq:upper}) respectively.
\end{thm}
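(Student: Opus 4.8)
The plan is to refine the algorithm underlying Theorem~\ref{thmintro} so that the constructed polynomial is simultaneously harmonic, transverse to $S^3$, and of controlled degree. First I would recall the parametrisation of the braid: writing the braid word as a product of $\ell$ Artin generators, I realise each generator $\sigma_i^{\pm 1}$ by an explicit trigonometric motion of two adjacent strands over a subinterval of $t\in[0,2\pi]$, and concatenate these to obtain $s$ continuous functions $\beta_1(t),\dots,\beta_s(t)$ whose graphs form the braid $B$ and whose value set is $2\pi$-periodic, so that the roots close up to $L$. Forming $g(u,t)=\prod_{j=1}^{s}(u-\beta_j(t))$, the coefficient of $u^{s-k}$ is $(-1)^k$ times the $k$-th elementary symmetric function of the $\beta_j(t)$; each such coefficient is a $2\pi$-periodic function of $t$ and, because the $\beta_j$ are built from finitely many trigonometric terms, a finite Fourier series in $\mathrm{e}^{\mathrm{i}t}$. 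The remaining work is to turn $g$ into a genuine polynomial on $S^3$ carrying the three extra properties.

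The harmonicity is the point that dictates the substitution. Since $f$ is to be semiholomorphic, $\partial_{\overline{u}}f=0$, so $(\partial_u\partial_{\overline{u}}+\partial_v\partial_{\overline{v}})f=\partial_v\partial_{\overline{v}}f$, and it suffices to make the $v$-dependence harmonic in the plane $\mathbb{C}_v$. The harmonic polynomials in $v,\overline{v}$ are exactly the linear combinations of the pure powers $v^m$ and $\overline{v}^m$; so I would expand each coefficient $a_k(t)=\sum_m c_{k,m}\mathrm{e}^{\mathrm{i}mt}$ and define $f$ by the substitution $\mathrm{e}^{\mathrm{i}mt}\mapsto v^m$ for $m\geq0$ and $\mathrm{e}^{\mathrm{i}mt}\mapsto\overline{v}^{-m}$ for $m<0$. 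No monomial then mixes $v$ and $\overline{v}$, so $\partial_v\partial_{\overline{v}}f=0$ and $f$ is harmonic. The main obstacle is that this substitution introduces radial factors $|v|^{|m|}$ that distort $g$ away from the circle $|v|=1$, whereas on $S^3$ one has $|v|<1$ wherever $u\neq0$. I would therefore confine the braid to a thin solid torus where the $\beta_j$ are small and $|v|$ is correspondingly bounded below, and prove by an estimate on the roots of $f(\cdot,v,\overline{v})$ in $u$ — controlling how far they can move away from the $\beta_j$ — that these roots stay simple and continue to trace out $B$, so that $f^{-1}(0)\cap S^3$ is ambient isotopic to $L$.

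Transversality and the regular-value property then come essentially for free from the simplicity of the roots. If at a zero of $f$ on $S^3$ the corresponding root in $u$ is simple, then $\partial_u f\neq0$ there, so the real differential $df:\mathbb{R}^4\to\mathbb{R}^2$ is surjective and $0$ is a regular value of $f$ near its zero set; a short computation checking that $\nabla f$ is not tangent to $S^3$ (again using $\partial_u f\neq0$ together with the bound on $|v|$) upgrades this to $0$ being a regular value of $f|_{S^3}$ and to transversality of $f^{-1}(0)$ with $S^3$.

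Finally, for the degree bounds I would carry the construction out component by component, writing $f=\prod_i p_i$ where $p_i(u,v,\overline{v})$ traces the $i$-th link component out of its $c_i$ strands. The $u$-degree of $p_i$ is exactly $c_i$, and the $v,\overline{v}$-degree of its coefficients equals the top Fourier mode occurring in the symmetric functions of that component's strands, which is governed by the number of crossings the component is involved in. A monomial $v^m u^k$ (or $\overline{v}^m u^k$) has degree $m+k$, so the total degree of $f$ is the maximum of $s=\sum_i c_i$, achieved by the leading term $u^s$, and the largest such $m+k$ arising from the lower-order coefficients; tracking these two quantities through the product yields the explicit expressions $c_1$ and $c_2$ of Eq.~(\ref{eq:lower}) and Eq.~(\ref{eq:upper}) and the stated bounds $\max\{s,c_1\}\leq\deg(f)\leq\max\{s,c_2\}$. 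I expect the bookkeeping of the Fourier frequencies per component, and in particular matching the lower bound $c_1$ to an actually realised (uncancelled) monomial, to be the most delicate part of this step.
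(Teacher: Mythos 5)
Your overall architecture matches the paper's: a trigonometric strand parametrisation, the product $\prod_j(u-\beta_j(t))$, the substitution $e^{imt}\mapsto v^m$ for $m\geq 0$ and $e^{imt}\mapsto\overline{v}^{-m}$ for $m<0$ (which is also exactly how the paper obtains harmonicity), a smallness/scaling argument confining the roots so that $f^{-1}(0)\cap S^{3}$ is isotopic to the closed braid, and simplicity of roots for transversality. But there is a genuine gap at the foundation. You build the strands $\beta_j$ by concatenating local trigonometric motions, one per Artin generator, over subintervals of $[0,2\pi]$, and then claim that the elementary symmetric functions of the $\beta_j$ are finite Fourier series ``because the $\beta_j$ are built from finitely many trigonometric terms''. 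This is false: a concatenation of trigonometric arcs is only piecewise trigonometric, and its (continuous, $2\pi$-periodic) symmetric functions generically have infinite Fourier expansions --- consider a function equal to $\sin 2t$ on $[0,\pi]$ and $0$ on $[\pi,2\pi]$. So your $g$ is not a polynomial in $e^{\pm it}$, and there is nothing to substitute $v$, $\overline{v}$ into. The paper avoids this by demanding that the strands of each component $C$ be translates $F_{C}\left((t+2\pi j)/s_{C}\right)$ of a single global trigonometric polynomial, found by trigonometric interpolation in Section \ref{algo}; the individual strands then carry fractional frequencies $k/s_{C}$, and the statement that all fractional frequencies cancel in the expanded product --- so that $g_{a,b}$ really is a polynomial in $e^{\pm it}$ --- is precisely the combinatorial induction of Lemma \ref{poly}, the key lemma of the whole construction, which your proposal silently assumes.

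The second gap is the degree bound. Even granting a genuine Fourier parametrisation, you have no mechanism tying its Fourier degrees to braid data. The paper's upper bound $c_{2}$ is not read off from the number of crossings of $B$ that a component is involved in; it comes from the interpolation algorithm: $F_{C}$ interpolates $s_{C}\ell$ data points, so $N_{C}\leq\lfloor(s_{C}\ell-1)/2\rfloor$, while the data points for $G_{C}$ are the crossings of the auxiliary diagram $B'$ cut out by the interpolated $F_{C}$'s --- these can be more numerous than $\ell$ and are counted via the fundamental theorem of algebra applied to the associated complex polynomials on the unit circle (Corollary \ref{bound}). The lower bound $c_{1}$ (Corollary \ref{lowbound}) is likewise a count of unavoidable crossings valid for any Fourier parametrisation, combined with the degree formula $\deg f_{\lambda}=\max\{D,s\}$, rather than the exhibition of an uncancelled monomial that you anticipate as the delicate step. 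Without an explicit interpolation scheme producing the parametrisation, your claim that ``tracking these two quantities through the product yields $c_{1}$ and $c_{2}$'' has no support.
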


The constructed polynomials also find applications in the study of a wide range of physical systems, including quantum-mechanical wavefunctions \cite{pob:2012vortex}, complex scalar optical fields \cite{dkjop:2010isolated}, nematic liquid crystals \cite{ma:2014knotted} and the Skyrme-Faddeev model \cite{sutcliffe:2007knots}, where they are used as initial configurations. Our algorithm hence provides a way of creating a knotted initial condition for any link type in any of these systems.

The structure of this paper is as follows. Section \ref{main} introduces the concept of Fourier parametrisations of braids, which are essential for the construction. We then show how we can use such a parametrisation to construct a semiholomorphic polynomial with a zero level set of the desired form.
In Section \ref{algo} we discuss a method of finding a Fourier braid parametrisation for any braid, making our construction fully algorithmic. In Section \ref{Props} we use the nature of our construction to prove, among other things, bounds for the degree of the resulting polynomials. We also givbe an upper bound for the Morse-Novikov number of a link and show that if the desired link is the closure of a strictly homogeneous braid, then there exists a semiholomorphic polynomial $f$ such that the argument $\arg(f)$ of the restriction of $f$ to the unit three-sphere is a fibration over $S^{1}$.

\section{Semiholomorphic polynomials with knotted nodal sets}
\label{main}
The idea behind the construction of the polynomials is very much in the spirit of \cite{us:2016lemniscate} and \cite{us:2016fivetwo}, but is not restricted to a certain class of knots and is fully algorithmic. We use certain trigonometric parametrisations of a given braid $B$ to define a 1-parameter family of functions from $\mathbb{C}\times[0,2\pi]$ to $\mathbb{C}$ whose zero level sets are $B$. Identifying the 2-periodic variable with the argument of a complex number $v$ allows us to close the braid and define a 1-parameter family of functions whose zero level set is the closure of $B$ for appropriate choices of the parameter. The parametrisation of the braid is chosen in such a way that the resulting function is a polynomial in $u$, $v$ and $\overline{v}$.  

We will think of a braid $B$ on $s$ strands as the union of $s$ disjoint parametric curves in $\mathbb{R}^{3}$, parametrised by their $z$-coordinate between 0 and $2\pi$.
This means that each strand is given by 
\begin{equation}
(X_{j}(t),Y_{j}(t),t),\quad j=0,1,\ldots,s-1,
\label{eq:para}
\end{equation}
where each $X_{j}$ and each $Y_{j}$ is a smooth function $[0,2\pi]\rightarrow\mathbb{R}$.

Additionally, we demand that there is a permutation $\pi_{B}\in S_{s}$, such that $X_{j}(0)=X_{\pi_{B}(j)}(2\pi)$ and $Y_{j}(0)=Y_{\pi_{B}(j)}(2\pi)$ for all $j\in\{1,2,\ldots,s\}$. Given these equalities, we can identify the $z=0$ plane and the $z=2\pi$ plane, which closes the braid to a link $L$. Note that there is a one to one correspondence between the cycles in the cycle notation of $\pi_{B}$ and the components of the link $L$. Furthermore, the length of each cycle is equal to the number of strands making up the corresponding link component.

Projecting the strands $(X_{j}(t),Y_{j}(t),t)$ on the $y=0$ plane will typically result in a braid diagram that allows to read off the braid word of $B$, that is the element of the braid group on $s$ strands $B_{s}$ corresponding to $B$ in terms of the Artin generators $\sigma_{j}$.

We want to describe an algorithm that for any given link $L$ constructs a semiholomorphic polynomial $f:\mathbb{C}\times\mathbb{R}^{2}\rightarrow\mathbb{C}$ such that $f^{-1}(0)\cap S^{3}=L$. This algorithm is a generalisation of the methods used in \cite{perron:1982noeud}, \cite{dkjop:2010isolated} and \cite{us:2016lemniscate}. Let $L$ be the link we want to construct and $B$ be a braid on $s$ strands that closes to $L$.

The first step of the algorithm is to find a parametrisation of the braid $B$ as in Eq. (\ref{eq:para}).
Then we can define a family of functions $g_{a,b}:\mathbb{C}\times[0,2\pi]\to\mathbb{C}$,
\begin{equation}
g(u,t)=\prod_{j=0}^{s-1}(u-aX_{j}(t)-ibY_{j}(t))
\label{eq:braidpoly}
\end{equation}
which has the braid $B$ as its zero level set $g_{a,b}^{-1}(0)$ for any $a>0$, $b>0$.

Every $g_{a,b}$ is by definition a polynomial in the complex variable $u$, but its dependence on $t$ is determined by the parametrisation. In Section \ref{fourier} we show that if the braid parametrisation is of a certain form, $g_{a,b}$ is a polynomial in $u$, $\rme^{\rmi t}$ and $\rme^{-\rmi t}$. As such it is the restriction of a semiholomorphic polynomial $f_{a,b}:\mathbb{C}^2\rightarrow\mathbb{C}$ to $\mathbb{C}\times S^1$.
Note that $f_{a,b}$ can be obtained from $g_{a,b}$ simply by replacing every instance of $\rme^{\rmi t}$ by a complex variable $v$ and $\rme^{-\rmi t}$ by $\overline{v}$, the complex conjugate of $v$.
Section \ref {proof} shows that for small $a$ and $b$ the zero level set of $f_{a,b}$ on the unit three-sphere is ambient isotopic to $L$, i.e. $f_{a,b}^{-1}(0)\cap S^{3}=L$.

\subsection{Fourier Braids}
\label{fourier}

In this section we describe a braid parametrisation as in Eq. (\ref{eq:para}) of a certain form that guarantees that $g_{a,b}$ defined by Eq. (\ref{eq:braidpoly}) can be written as a polynomial in $u$, $\rme^{\rmi t}$ and $\rme^{-\rmi t}$ for all $a$ and $b$.

Recall that we can associate to every braid $B$ with $s$ strands an element $\pi_{B}$ of the symmetric group $S_{s}$ on $s$ elements.
The cycles of $\pi_{B}$ correspond to the link components of the closure of $B$. We will denote the set of cycles of $\pi_{B}$, or equivalently the set of components of $L$, by $\mathcal{C}$. For a given cycle $C$ let $s_{C}$ denote the length of $C$ or equivalently the number of strands that form the link component $C$.

The condition that $X_{j}(0)=X_{\pi_{B}(j)}(2\pi)$ and $Y_{j}(0)=Y_{\pi_{B}(j)}(2\pi)$ for all $j$ ensures that the projection of the braid on the $xy$-plane is a collection of closed curves, one for each link component. Thus for each link component $C$ there exist $2\pi$-periodic, continuous  real functions $F_{C}$ and $G_{C}$ such that for every strand $j$ we have $X_{j}(t)=F_{C}\left(\frac{t+2\pi k}{s_{C}}\right)$, $Y_{j}(t)=G_{C}\left(\frac{t+2\pi k}{s_{C}}\right)$, $X_{\pi_{B}(j)}(t)=F_{C}\left(\frac{t+2\pi (k+1)}{s_{C}}\right)$ and $Y_{\pi_{B}(j)}(t)=G_{C}\left(\frac{t+2\pi (k+1)}{s_{C}}\right)$ for some $(C,k)$.

Let $F_{C}$ and $G_{C}$ be trigonometric polynomials, i.e. 
\begin{equation}F_{C}(t)=\sum_{k=-N_{C}}^{N_{C}}a_{C,k}\rme^{\rmi kt}\text{  and  }G_{C}(t)=\sum_{k=-M_{C}}^{M_{C}}b_{C,k}\rme^{\rmi kt}\end{equation}
with $a_{C,-k}=\overline{a}_{C,k}$ and $b_{C,-k}=\overline{b}_{C,k}$ for all $C\in\mathcal{C}$ and all $k$.
Suppose 
\begin{equation}
\bigcup_{C\in\mathcal{C}}\bigcup_{j=0}^{s_{C}-1}\left(F_{C}\left(\frac{t+2\pi j}{s_{C}}\right),G_{C}\left(\frac{t+2\pi j}{s_{C}}\right),t\right),\quad t\in[0,2\pi]
\label{eq:fourierpara}
\end{equation} 
is a parametrisation of the braid $B$ as in Eq. (\ref{eq:para}). Then as in Eq. (\ref{eq:braidpoly}) this parametrisation with $X_{j}(t)=F_{C}\left(\frac{t+2\pi k}{s_{C}}\right)$ and $Y_{j}(t)=G_{C}\left(\frac{t+2\pi k}{s_{C}}\right)$ for some $(C,k)$ leads to a function $g_{a,b}:\mathbb{C}\times [0,2\pi]\rightarrow\mathbb{C}$ with $B$ as its zero level set for all $a>0$, $b>0$ and we claim it is in fact a polynomial.

\begin{lemma}
\label{poly} The function $g_{a,b}$ can be written as a polynomial in $u$, $\rme^{\rmi t}$ and $\rme^{-\rmi t}$.
\end{lemma}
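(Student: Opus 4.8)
The plan is to group the product defining $g_{a,b}$ in Eq.~(\ref{eq:braidpoly}) according to the link components and then exploit the arithmetic of roots of unity. Since a product of polynomials in $u$, $\rme^{\rmi t}$ and $\rme^{-\rmi t}$ is again of this form, it suffices to treat the factor coming from a single cycle $C\in\mathcal{C}$: by the parametrisation in Eq.~(\ref{eq:fourierpara}) the strands making up the component $C$ contribute
\begin{equation}
Q_C(u,t):=\prod_{j=0}^{s_C-1}\left(u-aF_C\!\left(\tfrac{t+2\pi j}{s_C}\right)-\rmi b\,G_C\!\left(\tfrac{t+2\pi j}{s_C}\right)\right),
\end{equation}
and $g_{a,b}=\prod_{C\in\mathcal{C}}Q_C$.

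First I would change variables to expose the fractional exponentials. Writing $w=\rme^{\rmi t/s_C}$ and letting $\zeta=\rme^{2\pi\rmi/s_C}$ be a primitive $s_C$-th root of unity, the fact that $F_C$ and $G_C$ are trigonometric polynomials (with $a_{C,-k}=\overline{a}_{C,k}$, $b_{C,-k}=\overline{b}_{C,k}$, and $|w|=|\zeta|=1$) shows that each factor equals $P_C(w\zeta^{j})$, where
\begin{equation}
P_C(\xi)=u-a\sum_{m=-N_C}^{N_C}a_{C,m}\,\xi^{m}-\rmi b\sum_{n=-M_C}^{M_C}b_{C,n}\,\xi^{n}
\end{equation}
is a Laurent polynomial in $\xi$ whose coefficients are affine in $u$. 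Hence $Q_C(u,t)=\prod_{j=0}^{s_C-1}P_C(w\zeta^{j})$ is a Laurent polynomial in $w$ with $u$-polynomial coefficients.

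The key step is a symmetry argument. Replacing $w$ by $w\zeta$ only cyclically permutes the factors $P_C(w\zeta^{j})$, so $Q_C$ is invariant under $w\mapsto w\zeta$. Writing $Q_C=\sum_k c_k(u)\,w^{k}$ and comparing coefficients gives $c_k(u)(\zeta^{k}-1)=0$, which forces $c_k(u)=0$ unless $s_C\mid k$. Therefore only the powers $w^{s_C\ell}=\rme^{\rmi\ell t}$ survive, so $Q_C$ is in fact a Laurent polynomial in $\rme^{\rmi t}=w^{s_C}$, i.e. a polynomial in $u$, $\rme^{\rmi t}$ and $\rme^{-\rmi t}$. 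Taking the product over all $C\in\mathcal{C}$ then yields the claim for $g_{a,b}$.

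The main obstacle I anticipate is precisely the appearance of the fractional exponential $\rme^{\rmi t/s_C}$ attached to an individual strand: on its own such a term is not a polynomial in $\rme^{\rmi t}$, so the claim cannot be established strand by strand. The whole content of the lemma is that these fractional powers recombine into honest integer powers once the product runs over a complete cycle, and the root-of-unity invariance above is exactly what guarantees this cancellation. The remaining verifications — that each $P_C$ is a Laurent polynomial affine in $u$ and that the coefficient comparison is valid — are routine.
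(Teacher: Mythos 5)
Your proof is correct, but it takes a genuinely different route from the paper's. Both arguments begin with the same reduction—$g_{a,b}$ factors as a product over link components, so it suffices to treat a single cycle $C$—but they diverge from there. The paper works monomial by monomial after expanding the product: it groups the terms $u^{s-k}T^{m_1,\ldots,m_k}_{j_1,\ldots,j_k}$ by their frequency multiset $(m_1,\ldots,m_k)$ and proves, by induction on $k$, that the sum over all choices of distinct strand indices $(j_1,\ldots,j_k)$ vanishes whenever $s\nmid\sum_i m_i$; the induction step splits a root-of-unity sum into two pieces $T_1+T_2$ and culminates in an identity of the form $\Sigma=-(s-(k+1))\Sigma$, forcing $\Sigma=0$. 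Your symmetry argument bypasses this bookkeeping entirely: since replacing $w=\rme^{\rmi t/s_C}$ by $w\zeta$ merely permutes the factors $P_C(w\zeta^{j})$ cyclically, the product $Q_C$ is invariant as a Laurent polynomial in $w$, and a Laurent polynomial invariant under $w\mapsto \zeta w$, with $\zeta$ a primitive $s_C$-th root of unity, can only contain powers $w^{k}$ with $s_C\mid k$. This isolates the conceptual reason for the cancellation (invariance under the deck transformations of the covering $w\mapsto w^{s_C}$) rather than verifying it term by term, and it is shorter and less error-prone; the paper's explicit expansion offers nothing beyond being self-contained at the level of elementary exponential sums. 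Two small remarks: the reality conditions $a_{C,-k}=\overline{a}_{C,k}$, $b_{C,-k}=\overline{b}_{C,k}$ play no role in this step (they make $F_C$ and $G_C$ real-valued, which matters for the parametrisation being an honest braid in $\mathbb{R}^{3}$, not for polynomiality), and your coefficient comparison is legitimate precisely because the invariance $\prod_{j}P_C(w\zeta^{j+1})=\prod_{j}P_C(w\zeta^{j})$ holds as an identity of Laurent polynomials in the indeterminate $w$ (the factors are literally permuted), not merely as an equality of functions on the arc swept out by $\rme^{\rmi t/s_C}$ for $t\in[0,2\pi]$; it would be worth making that one sentence explicit.
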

\begin{proof} 
Obviously all exponents of $u$ are natural numbers, so we only have to show that after expanding the product in Eq. (\ref{eq:braidpoly}) all exponents of $\rme^{\rmi t}$ are integers.

We are going to show this for knots, so there is only one cycle in $\pi_{B}$. Since $g_{a,b}$ is the product over the functions associated with each cycle, this  will show that $g_{a,b}$ is a product of polynomials and hence a polynomial itself for any braid.

Since $g_{a,b}$ has exactly $s$ factors, every term of $g_{a,b}$ after expanding the product consists of exactly $s$ terms which can each be $u$ or $c_{m}\rme^{\frac{\rmi m(t+2\pi j)}{s}}$ for some non-zero integer $m$ and $c_{m}\in\mathbb{C}$. Hence every summand has the form 
\begin{equation}
u^{s-k}T_{j_{1}\ldots,j_{k}}^{m_{1},\ldots,m_{k}}=u^{s-k}\prod_{p=1}^{k}c_{m_{p}}\rme^{\frac{\rmi m_{p}(t+2\pi j_{p})}{s}}
\label{eq:term}
\end{equation}
with $j_{p}\neq j_{r}$ if $p\neq r$.

Moreover, if $T_{j_{1},\ldots,j_{k}}^{m_{1},\ldots,m_{k}}$ appears with the factor $u^{s-k}$, so do all $T_{\tilde{j}_{1},\ldots,\tilde{j_{k}}}^{m_{1},\ldots,m_{k}}$ with $\tilde{j_{i}}\in\{0,\ldots,s-1\},\ \tilde{j}_{i}\neq\tilde{j}_{r}$ if $i\neq r$. Note that if $m_{i}=m_{w}$, then $T_{j_{1},\ldots,j_{i},\ldots,j_{w},\ldots,j_{k}}^{m_{1}\ldots,m_{i},\ldots,m_{w},\ldots,m_{k}}=T_{j_{1},\ldots,j_{w},\ldots,j_{i},\ldots,j_{k}}^{m_{1}\ldots,m_{i},\ldots,m_{w},\ldots,m_{k}}$. This turns the proof of the lemma into a combinatorial problem. Summing over all summands with the same $m_{i}$ becomes summing over all possibilities of choosing $p$ distinct values $j_{p}$ between $1$ and $s$.

We will use induction on $k=s-l$, where $l$ is the exponent of $u$ in the relevant term, to show that all terms $T_{j_{1},\ldots,j_{k}}^{m_{1},\ldots,m_{k}}$ with $s\nmid\sum_{i=1}^{k}m_{i}$ cancel each other, i.e. 
$\underset{j_{i} \text{ disjoint}}{\underset{\in(\mathbb{Z}/s\mathbb{Z})^{k}}{\sum_{(j_{1},j_{2},..,j_{k})}}}
T_{j_{1},\ldots,j_{k}}^{m_{1},\ldots,m_{k}}=0$.

Note that this implies that all summands that involve non-integer exponents of $\rme^{\rmi t}$ enter the sum with a factor equal to zero. Hence $g_{a,b}$ can be written as a polynomial in $u$, $\rme^{\rmi t}$ and $\rme^{-\rmi t}$.  

Consider a term $T_{j}^{m}$ which comes with a factor of $u^{s-1}$, so there is only one term which is not $u$ $(k=1)$, so $T_{j}^{m}=c_{m}\rme^{\frac{\rmi m(t+2\pi j)}{s}}$ for some $j=0,\ldots,s-1$. Then $T_{j}^{m}$ must appear with the factor $u^{s-1}$ for all values of $j$. We get 
\begin{equation}\sum_{j=0}^{s-1}T_{j}^{m}=c_{m}\rme^{\rmi mt/s}\sum_{j=0}^{s-1}\rme^{2\pi \rmi mj/s},\end{equation}
where the last sum is $0$ if $s\nmid m$ which shows the statement for $k=1$.

Assume now there is a $k\in\{1,2,\ldots,s-1\}$ such that all terms $T_{j_{1},\ldots,j_{k}}^{m_{1},\ldots,m_{k}}$ with $s\nmid\sum_{i=1}^{k}m_{i}$ cancel each other, i.e  
\begin{equation}
\label{eq:indhyp}\sum_{j_{1}=1}^{s}\underset{j_{2}\neq j_{1}}{\sum_{j_{2}=1}^{s}}\ldots\underset{j_{k}\neq j_{1},j_{2},\ldots,j_{k-1}}{\sum_{j_{k}=1}^{s}}T_{j_{1},\ldots,j_{k}}^{m_{1},\ldots,m_{k}}=0.\end{equation}

Let $T_{j_{1}\ldots,j_{k+1}}^{m_{1},\ldots,m_{k+1}}$ be a term with $s\nmid\sum_{i=1}^{k}m_{i}$. Summing over all terms with the same $(m_{1},\ldots,m_{k+1})$, but different choices of $(j_{1},\ldots,j_{k+1})$ yields 
\begin{equation}
\label{eq:sum}
\underset{j_{i} \text{ disjoint}}{\underset{\in(\mathbb{Z}/s\mathbb{Z})^{k+1}}{\sum_{(j_{1},j_{2},..,j_{k+1})}}}T_{j_{1},..,j_{k+1}}^{m_{1},..,m_{k+1}}=\rme^{\rmi t\sum_{p=1}^{k+1}\frac{m_{p}}{s}}\left(\prod_{p=1}^{k+1}c_{m_{p}}\right)\underset{j_{i}\text{ disjoint}}{\underset{\in(\mathbb{Z}/s\mathbb{Z})^{k+1}}{\sum_{(j_{1},j_{2},..,j_{k+1})}}}\prod_{p=1}^{k+1}\rme^{\frac{2\pi \rmi m_{p}j_{p}}{s}}.
\end{equation}
There is at least one $m_{p}$ which is not divisible by $s$. Otherwise $s|\sum_{p=1}^{k+1}m_{p}$. Without loss of generality, we can assume that $s\nmid m_{k+1}$. Then $\sum_{j=1}^{s}\rme^{\frac{2\pi \rmi m_{k+1}j}{s}}=0$ and hence $\rme^{\frac{2\pi \rmi m_{p}j_{p}}{s}}=-\sum_{p=1}^{k}\rme^{\frac{2\pi \rmi m_{k+1}j_{p}}{s}}-\underset{w\neq j_{1}\ldots,j_{k+1}}{\sum_{w=1}^{s}}\rme^{\frac{2\pi \rmi m_{k+1}w}{s}}$. Thus Eq. (\ref{eq:sum}) equals

\begin{equation}
\label{eq:t1t2}
\rme^{\rmi t\sum_{p=1}^{k+1}\frac{m_{p}}{s}}\left(\prod_{p=1}^{k+1}c_{m_{p}}\right)\underset{j_{i} \text{ disjoint}}{\underset{\in(\mathbb{Z}/s\mathbb{Z})^{k+1}}{\sum_{(j_{1},j_{2},..,j_{k+1})}}}
\left(\prod_{p=1}^{k}\rme^{\frac{2\pi \rmi m_{p}j_{p}}{s}}\times\left(-\sum_{p=1}^{k}\rme^{\frac{2\pi \rmi m_{k+1}j_{p}}{s}}-\underset{w\neq j_{1}\ldots,j_{k+1}}{\sum_{w=1}^{s}}\rme^{\frac{2\pi \rmi m_{k+1}w}{s}}\right)\right)=T_{1}+T_{2},
\end{equation}

with
\begin{equation}T_{1}=-\rme^{\rmi t\sum_{p=1}^{k+1}\frac{m_{p}}{s}}\left(\prod_{p=1}^{k+1}c_{m_{p}}\right)\underset{j_{i}\text{ disjoint}}{\underset{\in(\mathbb{Z}/s\mathbb{Z})^{k+1}}{\sum_{(j_{1},j_{2},..,j_{k+1})}}}\sum_{r=1}^{k}\rme^{2\pi \rmi(m_{r}+m_{k+1})j_{r}}\underset{p\neq r}{\prod_{p=1}^{k}}\rme^{\frac{2\pi \rmi m_{p}j_{p}}{s}}\end{equation}
and
\begin{equation}T_{2}= -\rme^{\rmi t\sum_{p=1}^{k+1}\frac{m_{p}}{s}}\left(\prod_{p=1}^{k+1}c_{m_{p}}\right)\underset{j_{i}\text{ disjoint}}{\underset{\in(\mathbb{Z}/s\mathbb{Z})^{k+1}}{\sum_{(j_{1},j_{2},..,j_{k+1})}}}\underset{w\neq j_{1}\ldots,j_{k+1}}{\sum_{w=1}^{s}}\rme^{\frac{2\pi \rmi m_{k+1}w}{s}}\prod_{p=1}^{k}\rme^{\frac{2\pi \rmi m_{p}j_{p}}{s}}.\end{equation}
For every fixed $r$ the first term $T_{1}$ describes 
$-\underset{j_{i}\text{ disjoint}}{\underset{\in(\mathbb{Z}/s\mathbb{Z})^{k}}{\sum_{(j_{1},j_{2},..,j_{k})}}}T_{j_{1},\ldots,j_{k}}^{m_{1},\ldots,m_{r}+m_{k+1},\ldots,m_{k}}$,
 which vanishes by the induction hypothesis (\ref{eq:indhyp}), since $s\nmid \sum_{p=1}^{k+1}m_{p}=(m_{r}+m_{k+1})+\underset{p\neq r}{\sum_{p=1}^{k}}m_{p}$ for all $r$.
 
For every fixed $w$ the second term $T_{2}$ is equal to 
$-\underset{j_{i}\text{ disjoint}}{\underset{\in(\mathbb{Z}/s\mathbb{Z})^{k+1}}{\sum_{(j_{1},j_{2},..,j_{k+1})}}}T_{j_{1},\ldots,j_{k+1}}^{m_{1},\ldots,m_{k+1}}$. To see this recall that we are summing over all possibilities to pick $k+1$ distinct values for $j_{p}$ between $1$ and $s$. The only difference between $T_{2}$ and Eq. (\ref{eq:sum}) is that we pick a number $j_{k+1}$ before we pick our last value $w$ and demand that this is distinct from $j_{k+1}$. Note that $w$ is playing the role of $j_{k+1}$ in Eq. (\ref{eq:sum}). Since we are also summing over all possible values for $j_{k+1}$, which are $s-k$ many, every possible $w$ enters the sum $s-k-1$ times.

Hence Eq. \ref{eq:t1t2} becomes
\begin{equation}\underset{j_{i}\text{disjoint}}{\underset{\in(\mathbb{Z}/s\mathbb{Z})^{k+1}}{\sum_{(j_{1},j_{2},..j_{k+1})}}}T_{j_{1},\ldots,j_{k+1}}^{m_{1},\ldots,m_{k+1}}=-(s-(k+1))\underset{j_{i}\text{disjoint}}{\underset{\in(\mathbb{Z}/s\mathbb{Z})^{k+1}}{\sum_{(j_{1},j_{2},..j_{k+1})}}}T_{j_{1},\ldots,j_{k+1}}^{m_{1},\ldots,m_{k+1}}.\end{equation} 

It is $k+1\leq s$ and hence 
\begin{equation}\sum_{j_{1}=1}^{s}\underset{j_{2}\neq j_{1}}{\sum_{j_{2}=1}^{s}}\ldots\underset{j_{k+1}\neq j_{1},\ldots,j_{k}}{\sum_{j_{k+1}=1}^{s}}T_{j_{1},\ldots,j_{k+1}}^{m_{1},\ldots,m_{k+1}}=0.\end{equation}
This completes the proof for $k+1$ and proves the lemma by induction.

Hence all terms of $g_{a,b}$ where $\rme^{\rmi t}$ has a non-integer exponent cancel each other when we expand the product. Therefore $g_{a,b}$ is a polynomial in $u$, $\rme^{\rmi t}$ and $\rme^{-\rmi t}$.
\end{proof}

As a polynomial in $u$, $\rme^{\rmi t}$ and $\rme^{-\rmi t}$ the function $g_{a,b}$ is the restriction of a polynomial $f_{a,b}:\mathbb{C}^{2}\rightarrow\mathbb{C}$ in $u$, $v$ and $\overline{v}$ to $\mathbb{C}\times S^{1}$. Thus $f_{a,b}(u,\rme^{\rmi t})=g_{a,b}(u,t)$ and by construction the zero level set of $f_{a,b}$ on $\mathbb{C}\times S^{1}$ is equal to the closure of the braid $g_{a,b}^{-1}(0)$ for every $a>0$, $b>0$.
The polynomial $f_{a,b}$ can be constructed from $g_{a,b}$ in the following way. We expand the product in Eq. (\ref{eq:braidpoly}), so that $g_{a,b}$ is in the form of a polynomial in $u$, $\rme^{\rmi t}$ and $\rme^{-\rmi t}$. Then we replace every instance of $\rme^{\rmi t}$ by $v$ and every instance of $\rme^{-\rmi t}$ by $\overline{v}$. Since $f_{a,b}(u,\rme^{\rmi t})=g_{a,b}(u,t)$ for all $u\in\mathbb{C}$ and $t\in[0,2\pi]$, the zero level set of $f_{a,b}$ on $\mathbb{C}\times S^{1}$ is the closure of the braid $B$ parametrised by Eq. (\ref{eq:fourierpara}).
Performing this substitution in Eq. (\ref{eq:braidpoly}) before the product is expanded results in a different function, which is typically not a polynomial in $u$, $v$ and $\overline{v}$.

Note that substituting $t$ by $rt$ in $F_{C}$ and $G_{C}$ for some $r\in\mathbb{N}$ results in a parametrisation of $r$ repeats of the original braid. Thus if $f_{a,b}$ has a zero level set on $\mathbb{C}\times S^{1}$ which is of the form of the closure of a braid $B$, then multiplying each exponent of $v$ and $\overline{v}$ by $r$ gives a semiholomorphic polynomial with zero level set on $\mathbb{C}\times S^1$ which is the closure of $B^r$. 

The classes of knots that were discussed in \cite{us:2016lemniscate} can now be seen as particularly simple examples, since lemniscate knots are exactly the closures of repeats of braids given by parametrisations of the form of Eq. (\ref{eq:fourierpara}) with $F(t)=\cos(t)$ and $G(t)=\sin(\ell t)$ for some $\ell\in\mathbb{N}$ coprime to the number of strands $s$, generalising the torus knots with $\ell=1$. The spiral knots, first introduced in \cite{betvwy:2010spiral}, are exactly the closures of repeats of braids that can be parametrised by Eq. (\ref{eq:fourierpara}) with $F(t)=\cos(t)$ and $G(t)$ some trigonometric polynomial.

By Alexander's Theorem \cite{alexander:1923lemma} every link is the closure of a braid and since trigonometric polynomials are dense in the set of continuous $2\pi$-periodic functions from $\mathbb{R}$ to $\mathbb{R}$, every braid can be parametrised as in Eq. (\ref{eq:fourierpara}). Hence for every link $L$ there exists a family of semiholomorphic polynomials $f_{a,b}$ with $f_{a,b}^{-1}(0)\cap(\mathbb{C}\times S^{1})=L$ for all $a>0$, $b>0$.

In Section \ref{proof} we show that for small enough $a$ and $b$, it is in fact $f_{a,b}^{-1}(0)\cap S^{3}=L$ as desired.

\subsection{The proof of Theorem \ref{thmintro}}
\label{proof}

Let $f_{a,b}:\mathbb{C}^{2}\rightarrow\mathbb{C}$ be the polynomial constructed from $g_{a,b}$, so $f_{a,b}^{-1}(0)\cap (\mathbb{C}\times S^{1})$ is the closure of the braid that is $g_{a,b}^{-1}(0)$. We will make this statement more precise. Consider the map $\Psi:\mathring{D}\times(\mathbb{C}\backslash\{0\})\to S^{3}$,  $\Psi(u,r\rme^{\rmi t})=(u,\sqrt{1-|u|^{2}}\rme^{\rmi t})$, where $\mathring{D}$ is the interior of the unit disk in the complex plane. 
We think of $f_{a,b}$ as a family of polynomials, parametrised by $v=r\rme^{\rmi t}$, $a$ and $b$, in one complex variable $u$. In the following we set $a=\lambda a_{1}$ and $b=\lambda b_{1}$, leave $a_{1}$ and $b_{1}$ as real constants and only vary the real parameter $\lambda$. With this notation we write $f_{\lambda}$ instead of $f_{a,b}$, slightly abusing notation.
For fixed $r$, $t$ and $\lambda$, we denote the $s$ roots of $f_{\lambda}(\bullet,r\rme^{\rmi t})$ by $u_{\lambda,j}(r,t)$. Note that by definition $u_{\lambda,j,}(r,t)=\lambda u_{1,j}(r,t)$., $j=1,2,\ldots,s$. Thus if $\lambda$ is small enough, all roots $u_{\lambda,j}(1,\rme^{\rmi t})$ of $f_{\lambda}(\bullet,\rme^{\rmi t})$ lie in $\mathring{D}$. It is intuitively clear and can be shown that $\Psi(f_{\lambda}^{-1}(0)\cap(\mathbb{C}\times S^{1}))=\Psi((u_{\lambda,j}(1,\rme^{\rmi t}),\rme^{\rmi t}))$ is the closure of $B=g_{a,b}^{-1}(0)$, which is $L$. This idea can be found among others in \cite{perron:1982noeud}.

In order to show that $f_{\lambda}^{-1}(0)\cap S^{3}$ is $L$ as well, we need to show that there is an ambient isotopy between $L_{1}=\Psi(f_{\lambda}^{-1}(0)\cap(\mathbb{C}\times S^{1}))$ and $L_{2}=f_{\lambda}^{-1}(0)\cap S^{3}$.

\begin{defi}Two links $L_{1}$ and $L_{2}$ are ambient isotopic if there is a continuous function $F:S^{3}\times[0,1]\to S^{3}$, such that $F(\cdot,0)$ is the identity map, $F(L_{1},1)=L_{2}$ and $F(\cdot,s)$ is a homeomorphism for all $s\in[0,1]$.
\end{defi}

In fact by the Isotopy Extension Theorem, it is enough to construct a smooth isotopy between the two sets of curves.

\begin{thm}
\label{ext}
\cite{ek:1971imbeddings}
Let $I:S^{1}\cup\ldots\cup S^{1}\times[0,1]\to S^{3}$ be a smooth 1-parameter family of embeddings of $n$ circles in $S^{3}$, such that $I(S^{1},\ldots,S^{1},0)=L_{1}$ and $I(S^{1},\ldots,S^{1},1)=L_{2}$, then there exists an ambient isotopy $\tilde{I}:S^{3}\times[0,1]\to S^{3}$ with $\tilde{I}(L_{1},s)=I(S^{1},\ldots,S^{1},s)$.
\end{thm}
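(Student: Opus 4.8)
The plan is to treat Theorem~\ref{ext} as a standard instance of the \emph{isotopy extension} technique: convert the given isotopy of embeddings into a time-dependent vector field, extend that field to all of $S^{3}$, and integrate its flow to obtain the ambient isotopy $\tilde{I}$. Because $S^{3}$ is compact and the two links are compact $1$-manifolds (disjoint unions of circles), the usual analytic subtleties regarding support and completeness of the flow will be automatic, so the whole argument should go through cleanly.

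First I would package the $1$-parameter family $I$ into a single level-preserving embedding $\hat{I}:(S^{1}\cup\ldots\cup S^{1})\times[0,1]\to S^{3}\times[0,1]$ by $\hat{I}(x,t)=(I(x,t),t)$. Along its image the pushforward of the unit time vector is $\hat{I}_{*}(\partial_{t})=\dot{I}+\partial_{t}$, where $\dot{I}=\partial I/\partial t$ is the instantaneous velocity of the moving link and $\partial_{t}$ is the unit vector in the time direction. This velocity field, defined so far only along the track $\hat{I}((S^{1}\cup\ldots\cup S^{1})\times[0,1])$, is the object that must be globalised.

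The key step is to extend this to a smooth vector field $V$ on all of $S^{3}\times[0,1]$ of the form $V=W+\partial_{t}$, with $W$ tangent to the $S^{3}$-slices and agreeing with $\dot{I}$ along the track, while keeping the time-component identically $1$. For this I would take a tubular neighbourhood of the track, extend $\dot{I}$ across it, and then multiply by a bump function to cut the extension off to zero outside the neighbourhood; compactness of $S^{3}$ guarantees that $W$ is automatically compactly supported. This extension is the main obstacle: producing $W$ smoothly and globally from a velocity field defined only along a \emph{moving} submanifold is exactly where the tubular-neighbourhood and partition-of-unity machinery does the real work, and everything afterwards is formal.

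Finally I would integrate $V$. Since its time-component is $1$ and $W$ is compactly supported on a compact manifold, the flow $\Phi_{s}$ exists for all $s\in[0,1]$ and carries $S^{3}\times\{0\}$ diffeomorphically onto $S^{3}\times\{s\}$. Writing $\Phi_{s}(x,0)=(\tilde{I}(x,s),s)$ defines the ambient isotopy $\tilde{I}:S^{3}\times[0,1]\to S^{3}$: by construction $\tilde{I}(\cdot,0)=\mathrm{id}$, each $\tilde{I}(\cdot,s)$ is a diffeomorphism, and because $V$ was built to equal $\hat{I}_{*}(\partial_{t})$ along the track, the flow lines issuing from $L_{1}$ trace out precisely $I(S^{1},\ldots,S^{1},s)$, giving $\tilde{I}(L_{1},s)=I(S^{1},\ldots,S^{1},s)$. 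Since this is exactly the content of the cited result \cite{ek:1971imbeddings}, I would in practice simply invoke it, but the sketch above is the argument on which it rests.
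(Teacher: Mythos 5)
Your proposal is correct: the paper offers no proof of this statement at all, simply citing Edwards--Kirby \cite{ek:1971imbeddings}, which is also what you ultimately do. Your sketch of the underlying argument (level-preserving track, velocity field along the moving link, extension via tubular neighbourhood and bump function, integration of the flow using compactness and the unit time-component) is the standard and correct proof of the smooth isotopy extension theorem, so it is fully consistent with the paper's treatment.
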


For fixed $\lambda$, $j$ and $h$ we think of the roots $u_{\lambda,j}(r,t)$ as functions of $r$. 
Note that the union of the intersection points of the roots $u_{\lambda,j}(r,t)$ with $S^3$ is equal to the zero level set of $f_{\lambda}$ on $S^{3}$.
We would like to see that for every fixed $\lambda$, $j$ and $t$ there is a unique intersection point of $(u_{\lambda,j}(r,t)r\rme^{\rmi h})$ with $S^{3}$, so that there is a $1-1$ correspondence between the points in $f_{\lambda}^{-1}(0)\cap S^{3}$ and $f_{\lambda}^{-1}(0)\cap(\mathbb{C}\times S^{1})$.
To do this we first need to restrict the range of $r$ to a domain where all the roots are disjoint. This allows us to treat the roots $u_{\lambda,j}(r,t)$ as smooth functions of $r$.

\begin{lemma}
\label{disjoint} There is a $\delta>0$ independent of $\lambda$ such that if $u_{\lambda,j}(r_{1},t)=u_{\lambda,k}(r_{2},t)$ with $r_{1},r_{2}\in[1-\delta,1]$, then $j=k$.
\end{lemma}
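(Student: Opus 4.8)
The plan is to reduce the whole statement to the single case $\lambda=1$ and then deduce the separation of the branches from continuity of the roots of a monic polynomial, made uniform in $t$ by compactness. First I would exploit the scaling relation $u_{\lambda,j}(r,t)=\lambda u_{1,j}(r,t)$ noted above. For $\lambda\neq 0$ the equation $u_{\lambda,j}(r_{1},t)=u_{\lambda,k}(r_{2},t)$ is, after dividing by $\lambda$, equivalent to $u_{1,j}(r_{1},t)=u_{1,k}(r_{2},t)$, so the collision condition does not depend on $\lambda$ at all. Hence any $\delta$ that works for $\lambda=1$ works for every $\lambda$, and it is enough to analyse $f_{1}$; this is precisely what gives the $\lambda$-independence asserted in the statement.

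Next I would establish a uniform gap between the roots at $r=1$. There $v=\rme^{\rmi t}$, so $f_{1}(\bullet,\rme^{\rmi t})=g_{1}(\bullet,t)$ and its roots are exactly $u_{1,j}(1,t)=a_{1}X_{j}(t)+\rmi b_{1}Y_{j}(t)$. Since the strands of $B$ are disjoint, $(X_{j}(t),Y_{j}(t))\neq(X_{k}(t),Y_{k}(t))$ for $j\neq k$ and all $t$, and because $a_{1},b_{1}>0$ the affine map $(x,y)\mapsto a_{1}x+\rmi b_{1}y$ is injective, so these $s$ roots are pairwise distinct for every $t$. By continuity and compactness of $[0,2\pi]$ there is $\varepsilon>0$ with $|u_{1,j}(1,t)-u_{1,k}(1,t)|\geq 2\varepsilon$ for all $j\neq k$ and all $t$.

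I would then transfer this separation to nearby radii through continuity of roots. Being a product of monic linear factors in $u$, $f_{1}(\bullet,v)$ is monic of degree $s$, and its coefficients are polynomials in $v,\overline{v}$, hence uniformly continuous in $(r,t)$ on the compact set $\{(r,t):r\in[1/2,1],\ t\in[0,2\pi]\}$. The roots of a monic polynomial depend continuously on its coefficients, so by a Rouch\'e argument applied to the pairwise disjoint disks $B(u_{1,j}(1,t),\varepsilon/2)$ there is $\delta>0$, uniform in $t$, such that for every $r\in[1-\delta,1]$ each of these disks contains exactly one root of $f_{1}(\bullet,r\rme^{\rmi t})$ and all $s$ roots are accounted for; this single root is the branch $u_{1,j}(r,t)$. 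Consequently, for $r_{1},r_{2}\in[1-\delta,1]$ and $j\neq k$ the points $u_{1,j}(r_{1},t)$ and $u_{1,k}(r_{2},t)$ lie in the disjoint disks $B(u_{1,j}(1,t),\varepsilon/2)$ and $B(u_{1,k}(1,t),\varepsilon/2)$, so they cannot coincide, and equality forces $j=k$.

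The main obstacle is making the continuity-of-roots step uniform in $t$, i.e.\ ensuring one $\delta$ serves all $t\in[0,2\pi]$ at once. This is where compactness is essential: a uniform lower bound for $|f_{1}(\bullet,\rme^{\rmi t})|$ on the boundary circles $\partial B(u_{1,j}(1,t),\varepsilon/2)$, combined with the uniform continuity of the coefficients in $(r,t)$, produces a $t$-independent Rouch\'e threshold and hence a $t$-independent $\delta$, which by the first step is automatically independent of $\lambda$ as well.
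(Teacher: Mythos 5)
Your proposal is correct, and while it shares its skeleton with the paper's proof --- the reduction to $\lambda=1$ via the scaling $u_{\lambda,j}(r,t)=\lambda u_{1,j}(r,t)$, and the use of the uniform minimum gap between the roots at $r=1$ (your $2\varepsilon$ is the paper's $2\tilde{\delta}$) --- the core transfer step is genuinely different. The paper argues quantitatively: it restricts to an interval $[R,1]$ on which all roots are simple, invokes the Implicit Function Theorem to differentiate the roots with respect to $r$, bounds these derivatives by $D_{1}(\delta)$ and $D_{2}(\delta)$, chooses $\delta<\tilde{\delta}/\sqrt{D_{1}(\delta)^{2}+D_{2}(\delta)^{2}}$, and closes with a mean-value/triangle-inequality estimate showing distinct branches cannot meet. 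You instead run a soft argument: uniform continuity of the coefficients of the monic polynomial $f_{1}(\bullet,r\rme^{\rmi t})$ on a compact set, plus Rouch\'{e}'s theorem on the boundaries of the disjoint disks $B(u_{1,j}(1,t),\varepsilon/2)$, traps exactly one root per disk for all $r\in[1-\delta,1]$ with a $t$-independent threshold. Your route buys two things: it needs no a priori simplicity or differentiability of the roots (disjointness comes out as a conclusion rather than going in as structure via the paper's $R$), and it avoids the slightly awkward self-referential choice of $\delta$ in the paper (where $\delta$ appears on both sides of its defining inequality, requiring the remark that the right-hand side tends to a nonzero limit). The paper's route buys explicitness: $\delta$ is expressed through computable derivative bounds, consistent with the paper's general emphasis on concrete constants (as in its later explicit formulas for $\varepsilon_{1}$ and $\varepsilon_{2}$). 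One point you should spell out: your identification of the trapped root with the branch $u_{1,j}(r,t)$ requires noting that the branch is defined by continuity in $r$ from $u_{1,j}(1,t)$, and that a continuous path starting at the center of disk $j$, confined to a union of pairwise disjoint open disks, must remain in disk $j$ by connectedness --- you gesture at this, but it is the step that converts ``one root per disk'' into the labelled statement of the lemma.
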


\begin{proof}
Note that it is enough to show the lemma for $\lambda=1$, since $u_{\lambda,j}(r,t)=\lambda u_{1,j}(r,t)$. In particular the value that we find for $\delta$ when $\lambda=1$ will be sufficient for any choice of $\lambda>0$.

Let $R$ be the biggest value of $r$ for which different strands intersect for $\lambda=1$, that is $R=\max_{t\in[0,2\pi],\ j\neq k}\{r|\ u_{1,j}(r,t)=u_{1,k}(r,t),\ r\leq1\}$. Since $u_{\lambda,j}(r,t)=\lambda u_{1,j}(r,t)$ for all $\lambda$, $j$, $r$ and $t$, all the roots $u_{\lambda,j}(r,t)$ are simple roots of $f_{\lambda}(\bullet,r\rme^{\rmi t})$ as long as $r\in[R,1]$ and hence $\re(u_{\lambda,j}(r,t))$ and $\im(u_{\lambda,j}(r,t))$ are differentiable functions with respect to $r\in[R,1]$.

The Implicit Function Theorem allows us to calculate these derivatives in terms of $\frac{\partial f_{a}(\cdot,r\rme^{\rmi t})}{\partial u}$ and $\frac{\partial f_{a}(\cdot,r\rme^{\rmi t})}{\partial r}$. Let $D_{1}(\delta)=\max_{t\in[0,2\pi],\ j,\ r\in[1-\delta,1]}\left\{\left|\frac{\partial \re(u_{1,j}(r,t))}{\partial r}\right|\right\}$ and $D_{2}(\delta)=\max_{t\in[0,2\pi],\ j,\ r\in[1-\delta,1]}\left\{\left|\frac{\partial \im(u_{1,j}(r,t))}{\partial r}\right|\right\}$ for $\delta\in[0,1-R]$.

Take $\tilde{\delta}$ to be $\frac{1}{2}\underset{j\neq k}{\min_{t\in[0,2\pi]}}\{|u_{1,j}(1,t)-u_{1,k}(1,t)|\}$ and choose $\delta<\frac{\tilde{\delta}}{\sqrt{D_{1}(\delta)^2+D_{2}(\delta)^2}}$. Note that as $\delta$ approaches zero, the right hand side of the inequality converges to a non-zero value. Hence such a $\delta$ always exists. The Lemma follows then from 
\begin{align}
|u_{1,j}(r_{1},t)-u_{1,k}(r_{2},t)|&\geq|u_{1,j}(1,t)-u_{1,k}(1,t)|-|u_{1,j}(1,t)-u_{1,j}(r_{1},t)|\nonumber\\
&\ \ \ -|u_{1,k}(1,t)-u_{1,k}(r_{2},t)|\nonumber\\
&\geq|u_{1,j}(1,t)-u_{1,k}(1,t)| -2\delta |D_{1}(\delta)+\rmi D_{2}(\delta)|\nonumber\\
&>|u_{1,j}(1,t)-u_{1,k}(1,t)|-2\tilde{\delta}\geq0
\end{align} for all $r\in[1-\delta,1]$ and $j\neq k$.
\end{proof}

We use the following lemma to show that for every fixed $\lambda$, $j$ and $h$ the intersection point of $(u_{\lambda,j}(r,t),r\rme^{\rmi t})$ with $S^{3}$ is unique if $\lambda$ is small enough.

\begin{lemma}
\label{atmost} There exists $\varepsilon_{1}>0$ such that for every fixed $\lambda<\varepsilon_{1}$ and all fixed $j$ and $t$ there is at most one intersection of $(u_{\lambda,j}(r,t),r\rme^{\rmi t})$ with $S^{3}$ where $1-\delta\leq r\le1$.
\end{lemma}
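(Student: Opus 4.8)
The plan is to observe that the point $(u_{\lambda,j}(r,t),r\rme^{\rmi t})$ lies on $S^{3}$ precisely when $|u_{\lambda,j}(r,t)|^{2}+r^{2}=1$, and then to show that for small enough $\lambda$ the left-hand side is a \emph{strictly monotonic} function of $r$ on $[1-\delta,1]$. Strict monotonicity immediately yields that the equation can hold for at most one value of $r$, which is exactly the assertion. The geometric intuition is that the ``closing'' coordinate contributes $r^{2}$, whose $r$-derivative is bounded away from zero near $r=1$, while shrinking the overall scale $\lambda$ of the braid pushes all roots toward the origin and makes their radial variation negligible by comparison.

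Concretely, fix $\lambda$, $j$ and $t$ and set
\begin{equation}
\rho_{\lambda,j}(r,t)=|u_{\lambda,j}(r,t)|^{2}+r^{2}=\lambda^{2}|u_{1,j}(r,t)|^{2}+r^{2},
\end{equation}
using $u_{\lambda,j}(r,t)=\lambda u_{1,j}(r,t)$. By Lemma \ref{disjoint} the $s$ roots are pairwise distinct, hence simple, throughout $r\in[1-\delta,1]$, so each $u_{1,j}(\bullet,t)$ is a well-defined smooth branch and $\re(u_{1,j})$, $\im(u_{1,j})$ are differentiable in $r$, exactly as in the previous proof. Differentiating gives
\begin{equation}
\frac{\partial\rho_{\lambda,j}}{\partial r}=2r+\lambda^{2}\frac{\partial |u_{1,j}(r,t)|^{2}}{\partial r},
\end{equation}
and here $2r\geq 2(1-\delta)>0$ on the whole annulus, while the perturbation carries the prefactor $\lambda^{2}$.

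I would then bound the perturbation uniformly. Writing $\partial_{r}|u_{1,j}|^{2}=2\re(u_{1,j})\,\partial_{r}\re(u_{1,j})+2\im(u_{1,j})\,\partial_{r}\im(u_{1,j})$, this derivative is bounded by a constant $K$ independent of $j$, $t$ and $r\in[1-\delta,1]$, since $|u_{1,j}|$ is bounded on the compact domain and the radial derivatives of the real and imaginary parts are exactly the quantities controlled by $D_{1}(\delta)$ and $D_{2}(\delta)$ produced from the Implicit Function Theorem in the proof of Lemma \ref{disjoint}. Choosing $\varepsilon_{1}$ so that $\varepsilon_{1}^{2}K<2(1-\delta)$ then forces $\partial_{r}\rho_{\lambda,j}>0$ for every $\lambda<\varepsilon_{1}$, so $\rho_{\lambda,j}(\bullet,t)$ is strictly increasing and $\rho_{\lambda,j}(r,t)=1$ has at most one solution in $[1-\delta,1]$.

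The only genuine work is establishing the uniform bound $K$ on $\partial_{r}|u_{1,j}|^{2}$; everything else is a direct comparison of a strictly positive term $2r$ against an $O(\lambda^{2})$ perturbation. I expect that boundedness—which rests on the simplicity of the roots on the annulus (Lemma \ref{disjoint}), compactness of $[1-\delta,1]\times[0,2\pi]$, and the Implicit Function Theorem estimates already in hand—to be the main, though routine, obstacle, after which strict monotonicity and hence uniqueness follow with no further difficulty.
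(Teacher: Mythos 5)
Your proposal is correct and follows essentially the same route as the paper: both consider $\lambda^{2}|u_{1,j}(r,t)|^{2}+r^{2}-1$ on $[1-\delta,1]$, use the simplicity of the roots from Lemma \ref{disjoint} to get smoothness in $r$, and make the $r$-derivative strictly positive for small $\lambda$ by bounding the $O(\lambda^{2})$ term against $2r\geq 2(1-\delta)$, with your constant $K$ playing exactly the role of the paper's $2U(\tilde{D}_{1}+\tilde{D}_{2})$ and your condition $\varepsilon_{1}^{2}K<2(1-\delta)$ matching the paper's choice $\varepsilon_{1}=\sqrt{(1-\delta)/\bigl(U(\tilde{D}_{1}+\tilde{D}_{2})\bigr)}$. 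The only cosmetic difference is that you invoke strict monotonicity directly, while the paper phrases the same fact as ruling out an interior extremum between two hypothetical zeros.
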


\begin{proof}Lemma \ref{disjoint} implies that the roots $u_{\lambda,j}(r,t)$, $j=1,2,\ldots,s$ are disjoint for $r\in[1-\delta,1]$. As simple roots of the polynomial $f_{\lambda}(\bullet,r\rme^{\rmi t})$, they depend smoothly on its coefficients, in particular $u_{\lambda,j}(r,t)$ is a smooth function of $r$ for all $r\in[1-\delta,1]$.

Consider the function $|u_{\lambda,j}(r,t)|^{2}+r^{2}-1=\lambda^{2}|u_{1,j}(r,t)|^{2}+r^{2}-1$. This is a smooth function of $r$ on $[1-\delta,1]$ and its zeros correspond to intersection points of $u_{\lambda,j}(r,t)$ with $S^{3}$.

Now suppose a curve $(u_{\lambda,j}(r,t),r\rme^{\rmi t})$ with fixed $\lambda$, $j$ and $h$ has multiple intersection points with $S^{3}$ while $r\in[1-\delta,1]$. Then between these the function $|u_{\lambda,j}(r,t)|^{2}+r^{2}-1$ must have an extremum.

We can choose $\lambda$ small enough such that the derivative $\frac{d}{dr}(\lambda^{2}|u_{1,j}(r,t)|^{2}+r^{2}-1)$ is strictly positive for all $r\in[1-\delta,1]$. Hence for small $\lambda$, there is a unique intersection point with $r$ in that interval.

In fact we can find a sufficient value for $\varepsilon_{1}$ as follows.
 
We need to make sure that $\frac{d}{dr}(\lambda^{2}|u_{1,j}(r,t)|^{2}+r^{2}-1)>0$ for all $r\in[1-\delta,1]$ and all $\lambda<\varepsilon_{2}$. Define $\tilde{D}_{1}=\underset{r\in[1-\delta,1]}{\underset{j=0,1,\ldots,s-1}{\max_{t\in[0,2\pi]}}}\left\{\left|\frac{\partial |\re(u_{1,j}(r,t))|}{\partial r}\right|\right\}$ and $\tilde{D}_{2}=\underset{r\in[1-\delta,1]}{\underset{j=0,1,\ldots,s-1}{\max_{t\in[0,2\pi]}}}\left\{\left|\frac{\partial |\im(u_{1,j}(r,t))|}{\partial r}\right|\right\}$. Note that in general $D_{i}\neq\tilde{D}_{i}$. With $\varepsilon_{1}=\sqrt{\frac{(1-\delta)}{U(\tilde{D}_{1}+\tilde{D}_{2})}}$ we find 
\begin{equation}\frac{d}{dr}(\lambda^{2}|u_{1,j}(r,t)|^{2}+r^{2}-1)>2r-\lambda^{2}2U(\tilde{D}_{1}+\tilde{D}_{2})>0
,\end{equation} 
as long as $\lambda<\varepsilon_{1}$ and $r\in[1-\delta,1]$.

\end{proof}

Recall that $\delta$ from Lemma \ref{disjoint} does not depend on $\lambda$. This allows us to prove the following lemma.
\begin{lemma}
\label{intersect}
There exists $\varepsilon_{2}>0$ such that for all $\lambda<\varepsilon_{2}$ all intersections of the curves $u_{\lambda,j}(r,t)$ with $S^{3}$ occur with $r\in[1-\delta,1]$.
 \end{lemma}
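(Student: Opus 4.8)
The plan is to reduce the incidence condition $(u_{\lambda,j}(r,t),r\rme^{\rmi t})\in S^{3}$ to a single scalar equation in $r$ and to exploit the scaling $u_{\lambda,j}(r,t)=\lambda u_{1,j}(r,t)$ established above. A point $(u,v)$ lies on $S^{3}$ exactly when $|u|^{2}+|v|^{2}=1$, so with $u=u_{\lambda,j}(r,t)$ and $v=r\rme^{\rmi t}$ the intersection condition becomes
\begin{equation}
\lambda^{2}|u_{1,j}(r,t)|^{2}+r^{2}=1.
\end{equation}
Since the left-hand side is at least $r^{2}$, any intersection automatically satisfies $r\le 1$, so only the range $r\in[0,1]$ is relevant and the upper bound in the claim is free. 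The remaining task is to force every solution to have $r\ge 1-\delta$ once $\lambda$ is small.

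First I would produce a bound on the roots that is uniform over the whole relevant domain. The polynomial $f_{1}(\bullet,v)$ is monic of degree $s$ in $u$ (each factor in Eq.~(\ref{eq:braidpoly}) contributes the leading term $u$), and its coefficients are polynomials in $v$ and $\overline{v}$, hence continuous and therefore bounded on the compact disk $\{|v|\le 1\}$. A standard root bound for monic polynomials then yields a constant $M<\infty$ with $|u_{1,j}(r,t)|\le M$ for all $j$, all $t\in[0,2\pi]$ and all $r\in[0,1]$; crucially, monicity is what prevents roots from escaping to infinity as $v$ varies, so $M$ exists. If $M=0$ all roots sit at the origin and the statement is trivial, so assume $M>0$.

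With this bound in hand, the intersection equation gives $1-r^{2}=\lambda^{2}|u_{1,j}(r,t)|^{2}\le\lambda^{2}M^{2}$, hence $r^{2}\ge 1-\lambda^{2}M^{2}$. Choosing
\begin{equation}
\varepsilon_{2}=\frac{\sqrt{\delta(2-\delta)}}{M}
\end{equation}
ensures that for every $\lambda<\varepsilon_{2}$ one has $1-\lambda^{2}M^{2}>1-\varepsilon_{2}^{2}M^{2}=(1-\delta)^{2}$, so $r>1-\delta$. Together with $r\le 1$ this confines all intersections to $[1-\delta,1]$, as required, with $\varepsilon_{2}$ depending only on $\delta$ (fixed in Lemma~\ref{disjoint}) and on the root bound $M$.

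The only step requiring genuine care is the uniform root bound $M$: one must check that the roots stay bounded as $r\to 0$ (equivalently $v\to 0$), where the geometric picture of \emph{strands} degenerates. This is exactly where monicity of $f_{1}$ in $u$ is used, and it is the reason the argument is quantitative rather than merely a compactness statement; everything else is elementary manipulation of the scalar equation.
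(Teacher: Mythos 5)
Your proof is correct and takes essentially the same route as the paper: both exploit the scaling $u_{\lambda,j}(r,t)=\lambda u_{1,j}(r,t)$ together with a uniform bound on the roots of the monic polynomial $f_{1}(\bullet,v)$ (the paper cites Rouch\'e's Theorem, you use the standard Cauchy-type bound for monic polynomials) to conclude that for small $\lambda$ every point of the zero set with $r$ below $1-\delta$ lies strictly inside $S^{3}$. If anything, your version is slightly more careful, since you take the root bound uniformly over $r\in[0,1]$, whereas the paper's constant $U$ is formally only a maximum over $r\in[1-\delta,1]$ but is then applied for $r<1-\delta$.
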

\begin{proof}
Since $f_{\lambda}(\bullet,r\rme^{\rmi t})$ is a polynomial in the first variable $u$, we may write $f_{\lambda}(u,r\rme^{\rmi t})=\sum_{j=0}^{s}c_{j}\lambda^{s-j}u^{j}$, where $c_{j}\in\mathbb{C}$ depends on $r$ and $t$. Note that it follows from Rouch\'{e}s Theorem that 
\begin{equation}U=\underset{t\in[0,2\pi]}{\underset{j\in\{0,1,\ldots,s-1\}}{\max_{r\in[1-\delta,1]}}}\{1,\sum_{j=0}^{s-1}|c_{j}|\}\end{equation}
is a bound on the modulus of all roots of all polynomials $f_{1}(\bullet,r\rme^{\rmi t})$ with $r\in[1-\delta,1]$. Hence $\lambda U$ is an upper bound on the modulus of all roots of all these polynomials $f_{\lambda}(\bullet,r\rme^{\rmi t})$, which are by definition $u_{\lambda,j}(r,t)$.

We can now make sure that all  intersection points of the roots $u_{\lambda,j}(r,t)$ with $S^{3}$ happen in the region where $r\in[1-\delta,1]$ by setting $\varepsilon_{2}=\frac{\delta}{U}$. Then for all $\lambda<\varepsilon_{2}$, we have 
\begin{equation}
\label{eq:inside}
|u_{\lambda,j}(r,t)|^{2}+r^{2}<\lambda^{2}U^{2}+1-\delta<1
\end{equation}
for all $r<1-\delta$ and hence all intersections with $S^{3}$ happen for $r\in[1-\delta,1]$.

\end{proof}

Note that Eq. (\ref{eq:inside}) also shows that for every $\lambda<\varepsilon_{2}$ and every fixed $j$ and $h$, the curve $u_{\lambda,j}(r,t)$ intersects $S^3$, since $|u_{\lambda,j}(r,t)|^2+r^2\geq1$ at $r=1$.
Combining this with Lemma \ref{atmost} means that this intersection point is unique for every curve $(u_{\lambda,j}(r,t),r\rme^{\rmi t})$. Denote the value of $r$ at which the intersection occurs by $r_{\lambda,j}(t)$.

Recall the definition of the map $\Psi:\mathring{D}\times(\mathbb{C}\backslash\{0\})\to S^{3}$, $\Psi(u,r\rme^{it})=(u,\sqrt{1-|u|^{2}}\rme^{\rmi t})$. We now define a smooth isotopy between $\Psi(f_{\lambda}^{-1}(0)\cap(\mathbb{C}\times S^{1}))$, which we know to be the desired link, and $f_{\lambda}^{-1}(0)\cap S^{3}$.

Let $I:(\underbrace{S^{1}\cup\ldots\cup S^{1}}_{|\mathcal{C}|\text{ copies}})\times[0,1]$ be defined by 
\begin{equation}
I(\Psi((u_{\lambda,j}(1,t),\rme^{\rmi t})),s)=\Psi(u_{\lambda,j}(1-s+sr_{\lambda,j}(t),t),(1-s+sr_{\lambda,j}(t))\rme^{\rmi t}).
\label{eq:defpsi}
\end{equation}

\begin{lemma}
\label{isotopy} There is a $\varepsilon>0$ such that for all $\lambda<\varepsilon$ the function $I$ is a smooth isotopy from $\Psi(f_{\lambda}^{-1}(0)\cap(\mathbb{C}\times S^{1}))$ to $f_{\lambda}^{-1}(0)\cap S^{3}$.
\end{lemma}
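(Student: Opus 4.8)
The plan is to verify directly that the explicitly defined map $I$ in Eq. (\ref{eq:defpsi}) is a smooth $1$-parameter family of embeddings of $|\mathcal{C}|$ circles interpolating between $L_{1}=\Psi(f_{\lambda}^{-1}(0)\cap(\mathbb{C}\times S^{1}))$ at $s=0$ and $L_{2}=f_{\lambda}^{-1}(0)\cap S^{3}$ at $s=1$, and then to invoke Theorem \ref{ext}. Setting $\varepsilon=\min\{\varepsilon_{1},\varepsilon_{2}\}$ guarantees, via Lemmas \ref{atmost} and \ref{intersect}, that for $\lambda<\varepsilon$ every curve $(u_{\lambda,j}(r,t),r\rme^{\rmi t})$ meets $S^{3}$ in exactly one point, at a radius $r_{\lambda,j}(t)\in[1-\delta,1]$, while by Lemma \ref{disjoint} the roots $u_{\lambda,j}(r,t)$ remain pairwise disjoint throughout $r\in[1-\delta,1]$.

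First I would establish the regularity of the data feeding into $I$. Since the roots are simple and disjoint on $[1-\delta,1]$, each $u_{\lambda,j}(r,t)$ is a smooth function of $(r,t)$ there. The strictly positive sign of $\frac{\rmd}{\rmd r}(\lambda^{2}|u_{1,j}(r,t)|^{2}+r^{2}-1)$ already used in Lemma \ref{atmost} lets me apply the Implicit Function Theorem to $|u_{\lambda,j}(r,t)|^{2}+r^{2}-1=0$, so that $r_{\lambda,j}(t)$ is a smooth function of $t$. Because $\rho(s,t):=1-s+s\,r_{\lambda,j}(t)$ is a convex combination of $1$ and $r_{\lambda,j}(t)$, it stays in $[1-\delta,1]$ for every $s\in[0,1]$, and $I$ is therefore a composition of smooth maps (including $\Psi$, which is smooth where $|u|<1$ and the second argument is nonzero, both ensured by $\lambda U<\delta<1$ and $\rho\geq 1-\delta>0$). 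This yields smoothness of $I$ jointly in $s$ and the circle parameter $t$. Checking the endpoints is then immediate: at $s=0$ one has $\rho=1$, so $I(\cdot,0)$ is $\Psi(u_{\lambda,j}(1,t),\rme^{\rmi t})$, which is $L_{1}$; at $s=1$ one has $\rho=r_{\lambda,j}(t)$, and since $(u_{\lambda,j}(r_{\lambda,j}(t),t),r_{\lambda,j}(t)\rme^{\rmi t})$ already lies on $S^{3}$, the map $\Psi$ acts as the identity there, so $I(\cdot,1)$ is $f_{\lambda}^{-1}(0)\cap S^{3}=L_{2}$.

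The core of the argument, and the step I expect to be the main obstacle, is verifying that $I(\cdot,s)$ is an embedding for every intermediate $s$, not merely at the endpoints. Writing the image point as $(u_{\lambda,j}(\rho,t),\sqrt{1-|u_{\lambda,j}(\rho,t)|^{2}}\,\rme^{\rmi t})$, injectivity for two points sharing the same $t$ but lying on different strands $j\neq k$ follows because the corresponding $\rho$-values both lie in $[1-\delta,1]$, so Lemma \ref{disjoint} forces distinct first coordinates; this is precisely why that lemma was proved for the whole radial interval rather than only at $r=1$. Points with distinct $t\bmod 2\pi$ are separated by the argument of the second coordinate. The immersion condition holds because differentiating the second coordinate in $t$ produces a term $\rmi\sqrt{1-|u|^{2}}\,\rme^{\rmi t}$ that is bounded away from zero for small $\lambda$ and dominates the $u$-dependent corrections.

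Finally I would observe that the $2\pi$-periodicity and the permutation $\pi_{B}$ encoded in the parametrisation (\ref{eq:fourierpara}) make the strands close up correctly, so that for each $s$ the map $I(\cdot,s)$ is an embedding of exactly $|\mathcal{C}|$ circles. Having shown that $I$ is a smooth isotopy from $L_{1}$ to $L_{2}$, Theorem \ref{ext} produces an ambient isotopy between them, which establishes the lemma and, together with the earlier identification $\Psi(f_{\lambda}^{-1}(0)\cap(\mathbb{C}\times S^{1}))=L$, completes the proof that $f_{\lambda}^{-1}(0)\cap S^{3}=L$.
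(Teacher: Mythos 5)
Your proof is correct and follows essentially the same route as the paper's: the same choice $\varepsilon=\min\{\varepsilon_{1},\varepsilon_{2}\}$, the same endpoint verification, and the same use of Lemma \ref{disjoint} together with Lemma \ref{intersect} to rule out self-intersections of $I(\cdot,s)$ at intermediate $s$. You in fact supply details the paper leaves implicit (the Implicit Function Theorem argument for smoothness of $r_{\lambda,j}(t)$, the separation of points with distinct $t$ by the argument of the second coordinate, and the immersion condition), so the proposal is, if anything, more complete than the original.
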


\begin{proof}We have to show that for every $s$ the function $I(\bullet,s)$ is an embedding and that it is smooth with respect to $s$. Suppose there is an $s$ such that $I(\cdot,s)$ is not an embedding, then there exist complex numbers $u_{\lambda,j}(1-s+sr_{\lambda,j}(t),t)=u_{\lambda,k}(1-s+sr_{\lambda,k}(t),t)$ with $j\neq k$, but Lemma \ref{disjoint} and Lemma \ref{intersect} tell us that this does not happen if $\lambda<\min\{\varepsilon_{1},\varepsilon_{2}\}$. Furthermore, since simple roots of a polynomial depend smoothly on its coefficients and the map $\Psi$ is smooth, $t$ is a smooth isotopy.

Note that 
\begin{equation}I(\Psi((u_{\lambda,j}(1,t),\rme^{\rmi t})),0)=\Psi(u_{\lambda,j}(1,t),\rme^{\rmi t})=\Psi(f_{\lambda}^{-1}(0)\cap (\mathbb{C}\times S^{1}))\end{equation} 
and
\begin{equation}I(\Psi((u_{\lambda,j}(1,t),\rme^{\rmi t})),1)=\Psi(u_{\lambda,j}(r_{\lambda,j}(t),t),r_{\lambda,j}(t)\rme^{\rmi t})=f_{\lambda}^{-1}(0)\cap S^{3}\end{equation} 
which finishes the proof.
Note that $I$ is well-defined if $\lambda<\min\{\varepsilon_{1},\varepsilon_{2}\}$, so we can set $\varepsilon=\min\{\varepsilon_{1},\varepsilon_{2}\}$. 
\end{proof}

The Isotopy Extension Theorem says that $I$ extends to an ambient isotopy. Thus $f_{\lambda}^{-1}(0)\cap S^{3}$ is ambient isotopic to the desired link if $\lambda$ is small enough.

Since every link is the closure of a braid by Alexander's Theorem \cite{alexander:1923lemma} and for every braid $B$ there is a family of functions $f_{\lambda}$, this concludes the proof of Theorem \ref{thmintro}.
We have shown that for every link $L$ there is a function $f:\mathbb{C}^{2}\rightarrow\mathbb{C}$ such that $f^{-1}(0)\cap S^{3}=L$ and $f$ is a polynomial in complex variables $u$, $v$ and $\overline{v}$, i.e. $f$ is a semiholomorphic polynomial.

In \cite{us:2016fivetwo} we describe the constructed semiholomorphic polynomials for many knots and links of low minimal crossing number.
It turns out that in practice $\lambda$ can be chosen to be a lot larger than the bound $\varepsilon=\min\{\varepsilon_{1},\varepsilon_{2}\}$ which is given in the proof.

\section{Finding Fourier parametrisations}
\label{algo}

In this section we present an algorithm that generates a parametrisation as in Eq. (\ref{eq:fourierpara}) for any given braid.

For every link component $C\in\mathcal{C}$ we obtain $F_{C}$ and $G_{C}$ by trigonometric interpolating data points that can be obtained from a presentation of the braid diagram. For background on trigonometric interpolation we point the reader to \cite{atkinson:1988numerical}.
\ \\

\noindent\textbf{Step 1: Finding the data points for the trigonometric interpolation for $F_{C}$}

We need to perform a trigonometric interpolation for $F_{C}$ for each link component $C\in\mathcal{C}$. The data points for this interpolation are chosen such that they contain the information about the position of every strand of $C$ between two crossings in the braid diagram.

Let $s$ denote the number of strands and $\ell$ the length of the braid word. The given braid word allows us to draw a braid diagram of $B$. For convenience we draw the strands as piecewise linear curves with all crossing points evenly distributed along the $t$-axis as in Fig. \ref{fig:interpolation} a). If we neglect the signs of the crossings, the diagram consists of $s$ curves $(D_{C,j}(t),0,t)$, $t\in[0,2\pi]$, $C\in\mathcal{C}$, $j=0,1,\ldots,s_{C}$ in the $tx$-plane, each of which can be interpreted as the graph of a function $D_{C,j}(t)$ (Fig. \ref{fig:interpolation} b)). Since the crossing points are evenly distributed, they occur at $t_{k}=2\pi(2k-1)/(2\ell)$ with $k=1,2,\ldots \ell$. The braid diagram is drawn in such a way that the value of $D_{C,j}(2\pi)$ is equal to $D_{C,k}(0)$ for some $k$, say $k=j+1$. This way each strand obtains a label $(C,j)$. This label is unique once for each component $C$ an arbitrary strand of that component is chosen as the strand $(C,1)$. We can draw the graph of the piecewise-linear function $D_{C}:[0,2\pi]\to\mathbb{R}$, $D_{C}(\frac{t+2\pi j}{s_{C}})=D_{C,j}(t)$ as in Fig. \ref{fig:interpolation} c).

We can now perform a trigonometric interpolation through the points $(t_{k}/s_{C}-2\pi/(2s_{C}\ell), D_{C}(t_{k}-2\pi/(2\ell)))$, $k=1,2,\ldots,s_{C}\ell$ (shown in Fig. \ref{fig:interpolation} d)) for every component $C$ to obtain $|\mathcal{C}|$ trigonometric polynomials $F_{C}$ that satisfy $F_{C}(t_{k}/s_{C}-2\pi/(2s_{C}\ell))=D_{C}(t_{k}-2\pi/(2\ell))$ for every $k=1,2,\ldots s_{C}\ell$ and every link component $C\in\mathcal{C}$ (cf. Fig. \ref{fig:interpolation} e)).
\ \\

\noindent\textbf{Step 2: Trigonometric interpolation for $F_{C}$} 

The trigonometric polynomials $F_{C}$ can be explicitly constructed by using the discrete Fourier transform 
\begin{equation}\tilde{D}_{C,k}=\frac{1}{s_{C}\ell}\sum_{n=0}^{s_{C}\ell-1}D_{C}\left(t_{n}-\frac{2\pi}{2s_{C}\ell}\right)\rme^{-\rmi\left(\frac{t_{n}}{s_{C}}-\frac{2\pi}{2s_{C}\ell}\right)k}.\end{equation}

We get 
\begin{equation}F_{C}(t)=\begin{cases}\tilde{D}_{C,0}+\sum_{k=1}^{s_{C}\ell/2-1}2\re(\tilde{D}_{C,k})\cos(kt)-\sum_{k=1}^{s_{C}\ell/2-1}2\im(\tilde{D}_{C,k})\sin(kt)\\
\qquad\ +\tilde{D}_{C,s_{C}\ell/2}\cos\left(\frac{s_{C}\ell}{2}t\right),\qquad\text{if } s_{C}\ell\text{ is even},\\
\ \\
\tilde{D}_{C,0}+\sum_{k=1}^{(s_{C}\ell-1)/2}2\re(\tilde{D}_{C,k})\cos(kt)-\sum_{k=1}^{(s_{C}\ell-1)/2}2\im(\tilde{D}_{C,k})\sin(kt),\\ 
\hspace{4.9cm}\text{if } s_{C}\ell\text{ is odd.}\end{cases}\end{equation}

Since the example in Fig. \ref{fig:interpolation} is the knot $5_{2}$, there is only one link component. 
Figure \ref{fig:interpolation} e) shows the graph of the trigonometric polynomial $F_{C}$. The graphs of $F_{C}\left(\frac{t+2\pi j}{s_{C}}\right)$ in Fig. \ref{fig:interpolation} f) form a braid diagram $B'$ with unspecified signs of crossing. Note that in contrast to usual braid diagrams there might in general be more than two strands involved in a crossing and crossing strands might not be transverse.

Since the trigonometric polynomials $F_{C}$ are interpolating the data points, there is a bijection between the strands of $B'$ and the strands of the original braid $B$ and for every $k$ there is a crossing of $B'$ in the interval $\left[t_{k}-2\pi/(2\ell),t_{k+1}-2\pi/(2\ell)\right]$ that involves the same strands as the crossing of $B$ at $t_{k}$. This crossing might not be unique. However, for every pair of strands of $B$ that is not crossing at $t_{k}$, there is an even number of crossings in the diagram $B'$ between them in the interval $\left[t_{k}-2\pi/(2\ell),t_{k+1}-2\pi/(2\ell)\right]$ (counting multiplicities if a crossing is not transverse). For the pair of strands of $B$ that is crossing at $t_{k}$, the corresponding strands of $B'$ will cross an odd number of times in the interval $\left[t_{k}-2\pi/(2\ell),t_{k+1}-2\pi/(2\ell)\right]$ (counting multiplicities). This is due to the choice of data points for the trigonometric interpolation that store the information about the position of every strand of $B$ between two crossings.

\ \\

\noindent\textbf{Step 3: Finding the data points for the trigonometric interpolation for $G_{C}$}

Again we need to perform a trigonometric interpolation for each link component $C\in\mathcal{C}$. Recall that Step 2 resulted in $B'$, a braid diagram whose signs of crossings are not specified.
By choosing data points whose $t$-coordinates are the positions of crossings in the braid diagram $B'$ and choosing the $y$-coordinate appropriately, we attach signs to the crossings of $B'$ in such a way that it becomes the diagram of a braid isotopic to $B$.

This can be achieved as follows. For every $k=1,2,\ldots \ell$ we choose a bijection $w_{k}$ between the strands of $B'$ and a set of $s$ distinct real numbers such that the strand corresponding to the strand of $B$ which is overcrossing at $t_{k}$ gets assigned a larger number than the strand corresponding to the strand of $B$ which is undercrossing at $t_{k}$.

This time the $t$-coordinate of the data points used for the trigonometric interpolation for $G_{C}$ are the positions of the crossings that the strands $F_{C}\left(\frac{t}{s_{C}}+\frac{2\pi j}{s_{C}}\right)$ are involved in. Let the strand $(C,j)$ be involved in a crossing of $B'$ at $t=t_{0}$, i.e. $F_{C}\left(\frac{t_{0}}{s_{C}}+\frac{2\pi j}{s_{C}}\right)=F_{C'}\left(\frac{t_{0}}{s_{C'}}+\frac{2\pi m}{s_{C'}}\right)$ for some $C'\in\mathcal{C}$ and $m\in\{0,1,\ldots,s_{C'}-1\}$. Then $\left(\frac{t_{0}+2\pi j}{s_{C}},w_{k}((C,j))\right)$ is a data point. Note that if a strand is not involved in a crossing in the $k$th interval, then there is no data point for this strand in this interval and hence its image under $w_{k}$ does not affect the result at all.
We denote the data points by $(t_{k}',y_{k})$. The number of data points for $G_{C}$ depends on the number of crossings of $B'$ that strands of the link component $C$ are involved in.

Each crossing creates a data point for each strand involved in the crossing. This choice of data points implies that any family of trigonometric polynomials $G_{C}$ that interpolates these together with $F_{C}$ from Step 1 gives a parametrisation of a braid as in Eq. (\ref{eq:fourierpara}) and this braid is braid isotopic to $B$.
This can be seen as follows. Since the $y$-coordinate of the data points only depends on the strand and on $k$, all crossings between one pair of strands in the interval $\left[t_{k}-2\pi/(2\ell),t_{k+1}-2\pi/(2\ell)\right]$ in $B'$ have the same strand as an overcrossing strand (and the other as the undercrossing strand). This and the previous observation about the parity of numbers of crossings between strands means that in every interval $\left[t_{k}-2\pi/(2\ell),t_{k+1}-2\pi/(2\ell)\right]$ all crossings but one can be canceled. The two strands that are involved in the remaining crossing correspond to the two strands of $B$ crossing at $t_{k}$ and by definition of $w_{k}$ they cross with the required sign.
\ \\

\noindent\textbf{Step 4: Trigonometric interpolation for $G_{C}$}

Since the positions of the crossings of $B'$ are in general not equidistributed, the trigonometric interpolation does not directly translate to a discrete Fourier Transform.

Finding a trigonometric polynomial that interpolates the data points $(t_{k}',y_{k})$ is equivalent to finding a function $q(z)=\sum_{k=-M_{C}}^{M_{C}}c_{k}z^{k}$ with $q(\rme^{\rmi t_{k}'})=y_{k}$ for all $k$, which is equivalent to finding a polynomial $\tilde{q}\sum_{k=0}^{2M_{C}}c_{k-K}z^{k}$ such that $q(\rme^{\rmi t_{k}'})=y_{k}\rme^{\rmi Kt_{k}'}$ for all $k$ and $c_{-k}=\overline{c}_{k}$ for all $k$.

We can find such a function by using the Lagrange formula for polynomial interpolation. This allows us to explicitly calculate $G_{C}$. If the number $N$ of data points $(t_{k}',y_{k})$ is odd, say $2K=1$, we get 
\begin{equation}q(\rme^{\rmi t})=G_{C}(t)=\sum_{k=1}^{N}y_{k}\rme^{-\rmi Kt+\rmi Kt_{k}'}\underset{m\neq k}{\prod_{m=1}^{N}}\frac{\rme^{\rmi t}-\rme^{\rmi t_{m}'}}{\rme^{\rmi t_{k}'}-\rme^{\rmi t_{m}'}}.\end{equation} 
If $N$ is even, say $N=2K$, the result is 
\begin{equation}q(\rme^{\rmi t})=G_{C}(t)=\sum_{k=1}^{N}y_{k}\rme^{-\rmi Kt+\rmi Kt_{k}'}\frac{\rme^{\rmi t}-\rme^{\rmi\alpha_{k}}}{\rme^{\rmi t_{k}'}-\rme^{\rmi\alpha_{k}}}\underset{m\neq k}{\prod_{m=1}^{N}}\frac{\rme^{\rmi t}-\rme^{\rmi t_{m}'}}{\rme^{\rmi t_{k}'}-\rme^{\rmi t_{m}'}},\end{equation} 
where $\alpha_{k}$ is a free parameter that can be chosen to be $0$ for all $k$.

Writing $q(\rme^{\rmi t})=\sum_{k=-K}^{K}c_{k}\rme^{\rmi kt}$ as a power series in $\rme^{\rmi t}$ allows to compute the coefficients of $G_{C}$ for every link component $C\in\mathcal{C}$.

Then Eq. (\ref{eq:fourierpara}) with the trigonometric polynomials $F_{C}$ and $G_{C}$ obtained by trigonometric interpolation is a finite Fourier parametrisation of the desired braid.

\ \\
In this construction there are several choices to be made, each of which leads to different trigonometric polynomials. There is first of all the choice of braid. We will see in Section \ref{Props} how the choice of braid affects the possible degrees of the resulting trigonometric polynomials. It is not true in general that the simplest braid (in terms of number of crossings $\ell$ or number of strands $s$) leads to the simplest trigonometric polynomials (in terms of degree). However, we compute bounds for the degree of the trigonometric polynomials in terms of $\ell$ and $s$ and these are strictly increasing with $\ell$ and $s$.

Secondly, there is the choice of embedding the linear braid diagram in $\mathbb{R}^{2}$, which corresponds to the functions $D_{C,j}$. Demanding that between crossings the strands are equidistant and are arranged symmetrically around $x=0$ seem like reasonable conditions. The choice of function $D_{C}$ also means that we choose a first strand for each link component. The algorithm works for any such choice.

Furthermore, the choice of functions $w_{k}$ that are used to determine the data points for the interpolation for $G_{C}$ do not affect the topology of the braid parametrised by the obtained trigonometric polynomials. Like in the choice of the embedding it appears reasonable to place the strands that are involved in crossings in the relevant interval symmetrically and with equal distances around $y=0$.

\begin{figure}
\centering
\subfloat{\includegraphics[height=2.5cm]{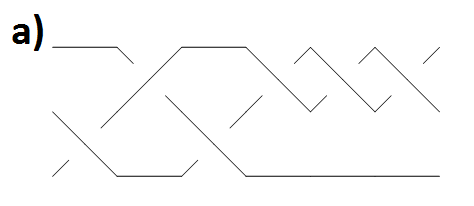}}\quad
\subfloat{\label{fig:b}\includegraphics[height=3cm]{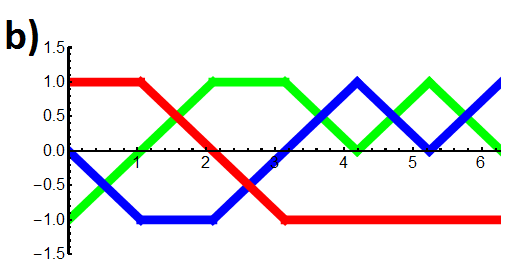}}\\
\hspace{-0.5cm}
\subfloat{\label{fig:c}\includegraphics[height=3cm]{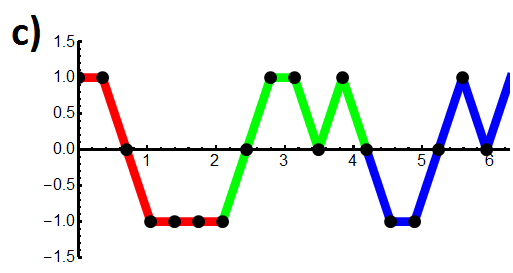}}\hspace{0.18cm}\quad
\subfloat{\label{fig:d}\includegraphics[height=3cm]{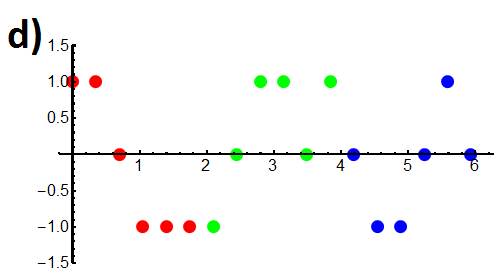}}\\
\hspace{-0.5cm}
\subfloat{\label{fig:e}\includegraphics[height=3.1cm]{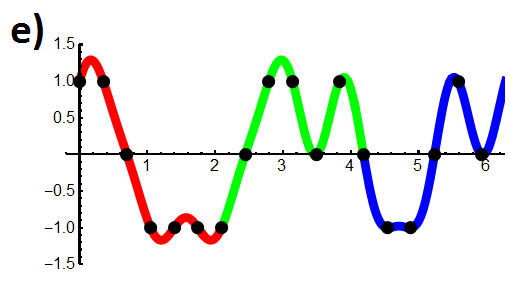}}\hspace{0.14cm}\quad
\subfloat{\label{fig:f}\includegraphics[height=3cm]{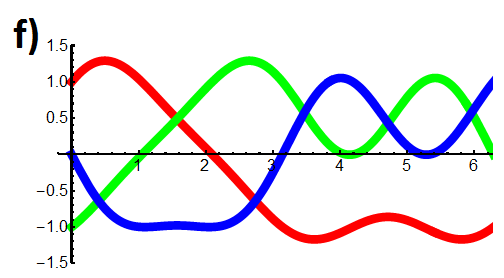}}

\caption{The interpolation for $F$. a) The braid diagram of a minimal braid of $5_{2}$. b) The projection of strands of the braid as piecewise linear curves in a coordinate system. c) The graph of the function $D_{C}$. d) The data points for the trigonometric interpolation. These can be read off from the graph of $D_{C}$. e) The graph of $F$, the trigonometric polynomial interpolating the data points. f) The graphs of the different $X_{j}$, when $F$ is used to parametrise the $x$-coordinate of the braid. The crossing points in this plot give rise to the data points for the interpolation for $G$.}
\label{fig:interpolation}
\end{figure}
\ \\

In \cite{us:2016lemniscate} we described two families of knots, lemniscate knots and spiral knots, whose particular symmetric braid words allowed comparatively simple trigonometric braid parametrisations.

Figure \ref{fig:interpolation} illustrates our method for the example of the knot $5_{2}$, which is the simplest knot (in terms of minimal crossing number) that is not a lemniscate knot or a spiral knot. For this example we choose the braid $\sigma_{2}\sigma_{1}^{-1}\sigma_{2}\sigma_{1}^{3}$ (Fig. \ref{fig:interpolation} a)) which closes to the knot $5_{2}$. In fact it is a minimum braid word of $5_{2}$ \cite{gittings:2004minimum}.
We read off braid words from braid diagram from left to right and strand positions are numerated from top to bottom. A positive crossing $\sigma_{i}$ corresponds to the strand in position $i$ passing over the strand in position $i+1$ from the top.

Since $5_{2}$ is a knot, there is only one component. To find $F_{C}$, we interpolate the data points $(\tilde{t}_{k},x_{k})$ for $k=0,1,\ldots,17$, where $\tilde{t}_{k}=2\pi k/18$ and $x_{k}$ is the $k$th entry of $k=(1,1,0,-1,-1,-1,-1,0,1,1,0,1,0,-1,-1,0,1,0)$. These data points can be read off from Fig. \ref{fig:interpolation} c) and are displayed in Fig. \ref{fig:interpolation} d).

The resulting interpolating trigonometric polynomial is 

\begin{align}
F(t)&=0.108784 \rme^{-8\rmi t}+0.0846189 \rmi \rme^{-7\rmi t}+0.24365\rmi \rme^{-5\rmi t}-0.0886815\rme^{-4\rmi t}\nonumber\\
&\ \ \ +0.479898\rme^{-2\rmi t}-0.129644\rmi \rme^{-\rmi t}+0.129644\rmi \rme^{\rmi t}+0.479898\rme^{2\rmi t}\nonumber\\
&\ \ \ -0.0886815\rme^{4\rmi t}-0.24365\rmi  \rme^{5\rmi t}-0.0846189 \rmi \rme^{7\rmi t}+0.108784 \rme^{8\rmi t}\nonumber\\
&=-0.259288\sin(t)+0.959796\cos(2t)-0.177363\cos(4t)+0.4873\sin(5t)\nonumber\\
&\ \ \ +0.169238\sin(7t)+0.217568\cos(8t).
\end{align}

The coefficients were computed numerically, so they are not necessarily exact. The graph of $F$ is plotted in Fig. \ref{fig:interpolation} e). In order to find the data points for the trigonometric interpolation of $G$, we need to compute the values of $h$ for which $F(t+2\pi j/3)=F(t+2\pi k/3)$ for some $j,k\in\{0,1,2\}$.

There is only one crossing between the red strand $(j=0)$ and the green strand $(j=1)$, namely at $t=0.523599$. Since this crossing corresponds to the crossing $\sigma_{1}^{-1}$ in the original braid word, the green strand is supposed to pass over the red strand. Hence the points $(0.523599,-1)$ and $(0.523599+2\pi/3,1)$ will be included in the list of data points used for the interpolation of $G$. Note that this corresponds to a bijection $w_{1}$ that sends the red strand to $-1$ and the green strand to $1$.

Similarly, we compute the intersection points of the other pairs of strands and obtain the list of data points $(0.523599,-1)$, $(0.912415,1)$, $(0.134782+2\pi/3,-1)$, $(0.523599+2\pi/3,1)$, $(1.15567+2\pi/3,1)$, $(1.5708+2\pi/3,-1)$, $(1.98592+2\pi/3,1)$, $(0.134782+4\pi/3,1)$, $(0.912415+4\pi/3,-1)$, $(1.15567+4\pi/3,-1)$, $(1.5708+4\pi/3,1)$, $(1.98592+4\pi/3,-1)$.

Performing a trigonometric interpolation through these points yields 
\begin{align}
G(t)&=19.0248-0.823358\cos(t)+17.1048\sin(t)-15.2722\cos(2t)-0.13139\sin(2t)\nonumber\\
&\ \ -0.454434\cos(3t)-12.8637\sin(3t)-0.823379 \cos(4t)+8.6227\sin(4t)\nonumber\\
&\ \ -4.10823\cos(5t)-0.818417\sin(5t).
\end{align}

Using $F$ and $G$ in Eq. (\ref{eq:fourierpara}) results in a Fourier parametrisation of the desired braid, closing to $5_{2}$. This allows us to construct $g_{a,b}$ and then $f_{a,b}$.
In \cite{us:2016fivetwo} we describe the construction of a semiholomorphic polynomial with zero level set of the form of the knot $5_{2}$. This involves finding a Fourier parametrisation for its braid through geometric considerations alone. The parametrisation found by our algorithm is significantly more complicated than the one described in \cite{us:2016fivetwo}. However, the procedure in \cite{us:2016fivetwo} is not algorithmic and in general very time consuming.

As discussed in \cite{us:2016fivetwo} braids whose words are identical up to signs, i..e $B_{1}=\sigma_{i_{1}}^{\varepsilon_{i_{1,1}}}\sigma_{i_{2}}^{\varepsilon_{i_{2,1}}}\ldots\sigma_{i_{\ell}}^{\varepsilon_{i_{\ell,1}}}$ and $B_{2}=\sigma_{i_{1}}^{\varepsilon_{i_{1,2}}}\sigma_{i_{2}}^{\varepsilon_{i_{2,2}}}\ldots\sigma_{i_{\ell}}^{\varepsilon_{i_{\ell,2}}}$ with all $\varepsilon_{i}\in\{\pm1\}$, can be parametrised using the same $F_{C}$. For example the function $F$ computed above can also be used to parametrise the $x$-coordinate of the braid $\sigma_{1}\sigma_{2}^{-1}\sigma_{1}\sigma_{2}^{-3}$, which closes to the knot $6_{2}$.

In \cite{us:2016fivetwo} we use geometric inspection to find a finite Fourier parametrisation for the braid word $\sigma_{1}^{-1}\sigma_{2}\sigma_{1}^{3}\sigma_{2}$, which is obviously also a minimal braid word for the knot $5_{2}$, since it differs from the one given above only by conjugation by $\sigma_{2}$. There are some fundamental differences between the approach in \cite{us:2016fivetwo} and the algorithm described here. 
The method from \cite{us:2016fivetwo} requires a lot of testing. Choosing a braid representative which is as symmetric as possible typically allows us to parametrise the braid using only few terms and low spatial frequencies. Geometric considerations give us a rough idea of a possible range of coefficients and spatial frequencies, so that it becomes feasible to find a finite Fourier parametrisation for the given braid.
The desire for symmetry makes $\sigma_{1}^{-1}\sigma_{2}\sigma_{1}^{3}\sigma_{2}$ a better choice of braid word, in particular if we place the crossings in equal distances along the $t$-axis and have the first crossing at $t=0$.
Note that this is different from the positions of the crossings in the description of our algorithm. In order to calculate the trigonometric interpolation efficiently by using the discrete Fourier transform, we want the data points to be equally distributed along the $t$-axis starting at $t=0$. Since the data points should lie between crossings, we explicitly do not want a crossing at $t=0$.

For short and symmetric braids with few strands the method from \cite{us:2016fivetwo} might lead to simpler polynomials in the sense of fewer terms and lower orders, for example the parametrisation for the knot $5_{2}$ in \cite{us:2016fivetwo} only has two terms for $F$ and $G$ each and the highest order is seven. However, since the method is not algorithmic, it is in general not a good way of finding parametrisations for more complicated braids.

With the described algorithm we can find a finite Fourier parametrisation for any braid $B$. Then $g_{a,b}$ defined as in Eq. (\ref{eq:braidpoly}) can be written as a polynomial in $u$, $\rme^{\rmi t}$ and $\rme^{-\rmi t}$ by Lemma \ref{poly}. Constructing $f_{a,b}$ as in Section \ref{fourier} gives a semiholomorphic polynomial which by the results from Section \ref{proof} has the closure of $B$ as its zero level set on the three-sphere of radius one if $a$ and $b$ are small enough. This makes our proof of the existence of semiholomorphic polynomials with knotted zero level sets fully algorithmic.

\section{Properties of the constructed polynomials}
\label{Props}
In this section we prove some properties of the constructed polynomials. We use the notation from Section \ref{main}, in particular the symbol $f_{\lambda}$ will always denote a polynomial that was constructed using the method described in Section \ref{main}.
\begin{cor}
\label{transverse}
If $\lambda$ is small enough, $0$ is a regular value of $f_{\lambda}|_{S^{3}}$.
\end{cor}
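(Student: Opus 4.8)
The plan is to deduce regularity of $f_{\lambda}|_{S^{3}}$ from two independent facts: that $0$ is a regular value of $f_{\lambda}$ as a map $\mathbb{C}^{2}\to\mathbb{C}$ at the relevant points, and that the surface $f_{\lambda}^{-1}(0)$ meets $S^{3}$ transversally there. A standard dimension count then forces the restriction to $S^{3}$ to be regular, so that the content of the corollary splits into one essentially free linear-algebra step and two estimates that are already available from the lemmas of Section \ref{proof}.

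First I would establish that $df_{\lambda}$ is surjective at every point of $f_{\lambda}^{-1}(0)\cap S^{3}$. By Lemma \ref{intersect} all such points have $r\in[1-\delta,1]$, and by Lemma \ref{disjoint} the $s$ roots $u_{\lambda,j}(r,t)$ are then pairwise distinct, so each is a simple root of the one-variable polynomial $f_{\lambda}(\bullet,r\rme^{\rmi t})$. Simplicity means $\partial_{u}f_{\lambda}\neq0$ at such a point $p$. Since $f_{\lambda}$ is holomorphic in $u$, its differential in the two real $u$-directions is the $\mathbb{R}$-linear map $\mathbb{C}\to\mathbb{C}$, $\zeta\mapsto(\partial_{u}f_{\lambda})\zeta$, which is an isomorphism as soon as $\partial_{u}f_{\lambda}\neq0$. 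Hence $df_{\lambda}(p):\mathbb{R}^{4}\to\mathbb{R}^{2}$ is already surjective on the $u$-plane alone, so $0$ is a regular value of $f_{\lambda}$ at all of these points, requiring no smallness of $\lambda$ beyond $\lambda<\varepsilon_{2}$.

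Next I would prove transversality. Near $p$ the branch of $S:=f_{\lambda}^{-1}(0)$ through $p$ is the smoothly embedded surface $(r,t)\mapsto(u_{\lambda,j}(r,t),r\rme^{\rmi t})$ (smooth embedding because the roots are simple, by the previous step), and on it the squared radius is $\rho=|u_{\lambda,j}(r,t)|^{2}+r^{2}=\lambda^{2}|u_{1,j}(r,t)|^{2}+r^{2}$. The computation already carried out in the proof of Lemma \ref{atmost} shows $\partial_{r}\rho=\frac{d}{dr}(\lambda^{2}|u_{1,j}(r,t)|^{2}+r^{2})>0$ for all $r\in[1-\delta,1]$ once $\lambda<\varepsilon_{1}$. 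Thus $d(\rho|_{S})\neq0$ at $p$, i.e. $p$ is not a critical point of the restriction of the squared-radius function to $S$, which is exactly the statement that $S$ is transverse to the level set $S^{3}=\rho^{-1}(1)$ at $p$.

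Finally I would assemble these by linear algebra. By the first step $T_{p}S=\ker df_{\lambda}(p)$ is two-dimensional, while $T_{p}S^{3}$ is the three-dimensional hyperplane $p^{\perp}$. Transversality gives $T_{p}S+T_{p}S^{3}=\mathbb{R}^{4}$, so $\dim(T_{p}S\cap T_{p}S^{3})=2+3-4=1$. Since $T_{p}S\cap T_{p}S^{3}$ is precisely $\ker\bigl(df_{\lambda}(p)|_{T_{p}S^{3}}\bigr)$, the map $df_{\lambda}(p)|_{T_{p}S^{3}}:T_{p}S^{3}\to\mathbb{R}^{2}$ has a one-dimensional kernel on a three-dimensional domain and is therefore surjective; that is, $0$ is a regular value of $f_{\lambda}|_{S^{3}}$ for $\lambda<\min\{\varepsilon_{1},\varepsilon_{2}\}$. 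The only genuinely new input is the transversality step, and even that is essentially free: it reduces to the positivity of $\partial_{r}\rho$, which is the same monotonicity estimate that Lemma \ref{atmost} already establishes and which is the single place where smallness of $\lambda$ is used. The remaining work is bookkeeping, namely confirming that the local parametrizations $(u_{\lambda,j}(r,t),r\rme^{\rmi t})$ are smooth embeddings covering all of $f_{\lambda}^{-1}(0)\cap S^{3}$, which again follows from the simplicity of the roots.
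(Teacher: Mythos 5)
Your proof is correct and takes essentially the same route as the paper: simplicity of the roots gives $\partial_{u}f_{\lambda}\neq0$ (hence $\nabla f_{\lambda}$ has full rank), and transversality of the zero set with $S^{3}$ follows from the monotonicity of $\lambda^{2}|u_{1,j}(r,t)|^{2}+r^{2}$ established in Lemma \ref{atmost}. The only difference is that you make explicit what the paper compresses into ``a straightforward calculation'' and the final dimension-count, which the paper leaves implicit.
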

\begin{proof} Let $(u_{*},v_{*})\in S^{3}$ be a point on the link, so $f_{\lambda}(u_{*},v_{*})=0$. Then for fixed $v=v_{*}$, the function $f_{\lambda}$ is a polynomial in $u$ with $u_{*}$ being a simple root. Hence $\frac{\partial f_{\lambda}}{\partial u}(u_{*},v_{*})\neq 0$, so in particular $\nabla f_{\lambda}(u_{*},v_{*})$ has full rank.\newline
Furthermore, for small fixed $\lambda$ a straightforward calculation shows that the intersections of the curves $(u_{\lambda,j}(r,t),r\rme^{\rmi t})$, $j=1,2,\ldots,s$, $t\in[0,2\pi]$ fixed, with $S^{3}$ are transverse. Hence $(u_{*},v_{*})$ is a regular point of $f|_{S^{3}}$.

\end{proof}
This corollary is important for applications which focus on functions $\mathbb{R}^{3}\rightarrow\mathbb{C}$. In order to make the functions satisfy extra physical conditions, which depend on the individual setting, coefficients might have to be slightly changed. The fact that $0$ is a regular value offers a certain stability under small perturbations of the originally constructed polynomial. This means that if we do not change the coefficients too much, the zero set of interest will remain the desired link.

In \cite{rudolph:2005knot} Lee Rudolph discusses transverse  $\mathbb{C}$-links. These are links which can be realised as the transverse intersection of the zero level set of a complex polynomial $f:\mathbb{C}^{2}\to\mathbb{C}$ with the unit three-sphere. A link is a transverse $\mathbb{C}$-link if and only if it is quasi-positive. Corollary \ref{transverse} shows that if we relax the condition from $f$ being a complex polynomial to being semiholomorphic, then every link can be realised as the transverse intersection of the zero level set of such a polynomial with $S^{3}$.

\begin{cor}The polynomial $f_{\lambda}$ is harmonic, that is $(\partial_{u}\partial_{\overline{u}}+\partial_{v}\partial_{\overline{v}})f_{\lambda}=0$.
\end{cor}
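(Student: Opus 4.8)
The plan is to split the operator $(\partial_{u}\partial_{\overline{u}}+\partial_{v}\partial_{\overline{v}})$ into its two summands and show that each annihilates $f_{\lambda}$ on its own. The first summand is immediate: by construction $f_{\lambda}$ is semiholomorphic, i.e.\ a polynomial in $u$, $v$ and $\overline{v}$ with no dependence on $\overline{u}$. Hence $\partial_{\overline{u}}f_{\lambda}=0$, and therefore $\partial_{u}\partial_{\overline{u}}f_{\lambda}=0$. This reduces the claim to showing $\partial_{v}\partial_{\overline{v}}f_{\lambda}=0$.

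For the second summand I would return to the way $f_{\lambda}$ is obtained from $g_{a,b}$. Recall that one first expands the product in Eq.\ (\ref{eq:braidpoly}) so that $g_{a,b}$ is written as a (Laurent) polynomial in $u$, $\rme^{\rmi t}$ and $\rme^{-\rmi t}$, and only then replaces each $\rme^{\rmi t}$ by $v$ and each $\rme^{-\rmi t}$ by $\overline{v}$. The essential observation is that, once the product is expanded and like terms are collected, $g_{a,b}$ is a sum of terms of the shape $c\,u^{p}\rme^{\rmi n t}$ with a single integer exponent $n$: any cross product $\rme^{\rmi t}\rme^{-\rmi t}$ collapses to $1=\rme^{\rmi 0 t}$, so no monomial contains both $\rme^{\rmi t}$ and $\rme^{-\rmi t}$ simultaneously. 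Consequently, after the substitution every monomial of $f_{\lambda}$ has the form $c\,u^{p}v^{n}$ (for $n\geq 0$) or $c\,u^{p}\overline{v}^{\,m}$ (for $m>0$), and no surviving monomial carries both $v$ and $\overline{v}$ to positive powers.

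With this structural fact in hand the remaining computation is routine and I would not spell it out in full. A monomial $u^{p}v^{n}$ is holomorphic in $v$, so $\partial_{\overline{v}}$ kills it; a monomial $u^{p}\overline{v}^{\,m}$ is anti-holomorphic in $v$, so $\partial_{\overline{v}}$ produces a pure power of $\overline{v}$ which $\partial_{v}$ then annihilates. Summing over all monomials gives $\partial_{v}\partial_{\overline{v}}f_{\lambda}=0$, and together with $\partial_{u}\partial_{\overline{u}}f_{\lambda}=0$ this yields harmonicity.

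The only genuine content of the argument, and the step I would flag as the crux, is the absence of mixed $v\overline{v}$ monomials, i.e.\ that no term $u^{p}v^{q}\overline{v}^{r}$ with $q,r>0$ survives. This is exactly where the order of operations in the construction matters: expanding $g_{a,b}$ \emph{before} substituting forces each Laurent monomial into a single exponent $\rme^{\rmi n t}$, whereas substituting first (as warned after Lemma \ref{poly}) would reintroduce products $v\overline{v}=|v|^{2}$, which fail to be harmonic since $\partial_{v}\partial_{\overline{v}}|v|^{2}=1\neq 0$. Everything else reduces to elementary differentiation of monomials.
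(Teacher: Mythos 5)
Your proposal is correct and follows essentially the same route as the paper: both reduce the claim to the absence of mixed monomials $u^{p}v^{q}\overline{v}^{r}$ with $q,r>0$, which holds because the substitution $\rme^{\rmi t}\mapsto v$, $\rme^{-\rmi t}\mapsto\overline{v}$ is performed only after expanding $g_{a,b}$ into Laurent monomials with a single integer exponent of $\rme^{\rmi t}$. The paper phrases the conclusion as ``$\partial_{\overline{v}}f_{\lambda}$ does not depend on $v$'' rather than differentiating monomial by monomial, but this is the same argument.
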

\begin{proof}This is immediate from the construction. Since $f_{\lambda}$ is a polynomial in $u$, there is no dependence on $\overline{u}$. Recall that $f_{\lambda}$ was obtained from $g_{a,b}$ by replacing $\rme^{\rmi t}$ by $v$ and $\rme^{-\rmi t}$ by $\overline{v}$ and $g_{a,b}$ is a polynomial in $u$, $\rme^{\rmi t}$ and $\rme^{-\rmi t}$. Now suppose there was a monomial containing both $v$ and $\overline{v}$, say $v^n \overline{v}^m$, as a factor. Then in the polynomial expression of $g_{a,b}$ these would have simplified to $\rme^{\rmi t(n-m)}$. Thus $\partial_{\overline{v}}f_{\lambda}$ does not depend on $v$ which proves the corollary.
\end{proof}

Not every polynomial of the form constructed in Section \ref{main} will be of practical use. In particular with regard to the engineering of physical knotted fields, a polynomial with too many terms or of too high degree might be too hard to control to be applicable to some of the systems described in \cite{us:2016lemniscate}. This is the reason why a naive algebraic approximation of a link given as a parametric curve is not particularly useful.

The advantage of our construction is that it allows for an upper bound of the degree of the constructed polynomial $f_{\lambda}$ in terms of the braid data.

We use the notation from Section \ref{main}. The algorithm in Section \ref{algo} finds a finite Fourier parametrisation (\ref{eq:fourierpara}) of a braid $B$ that closes to the desired link $L$. Using trigonometric interpolation to find such a parametrisation allows us to give a bound on the highest order $D=\max_{C\in\mathcal{C}}\{N_{C},M_{C}\}$ in terms of the length $\ell$ of the braid and the number of strands $s_{C}$ in each link component $C$.

\begin{lemma}Let $\mathcal{C}$ be again the set of all link components of $L$ and let $\tilde{C}$ be the link component which consists of the most strands of the braid $B$ which is used to construct $f_{\lambda}$. Then 
\begin{equation}D\leq\max\left\{\left\lfloor\frac{s_{\tilde{C}}\ell-1}{2}\right\rfloor,\left\lfloor\frac{\ell s_{\tilde{C}}^{2}|\mathcal{C}|+s_{\tilde{C}}(1-\ell)-2}{2}\right\rfloor\right\},\end{equation} 
where $s_{\tilde{C}}$ is the number of strands which $\tilde{C}$ consists of and $\ell$ is the length of the braid word.
\end{lemma}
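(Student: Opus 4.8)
The plan is to bound the two ingredients of $D=\max_{C}\{N_C,M_C\}$ separately, since $N_C$ is the degree of the interpolant $F_C$ and $M_C$ that of $G_C$. The first contribution, giving the entry $\lfloor(s_{\tilde C}\ell-1)/2\rfloor$, is immediate from the construction in Step 2: $F_C$ comes from a discrete Fourier transform through $s_C\ell$ equally spaced nodes, and reading the top frequency off the explicit formula written there bounds $N_C$ by essentially $s_C\ell/2$ (equal to $(s_C\ell-1)/2$ in the generic odd case, the even Nyquist term contributing only a cosine). Since this is increasing in $s_C$, the maximum over components is attained at the component $\tilde C$ with the most strands, which produces the first entry of the maximum.

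For $M_C$ I would use that $G_C$ is obtained by the Lagrange-type interpolation of Step 4, so that by the formulas there its degree satisfies $M_C\le\lfloor \#\{\text{data points for }C\}/2\rfloor$. By construction each crossing of $B'$ in which a strand of $C$ participates contributes one data point per participating strand of $C$; hence $\#\{\text{data points for }C\}$ equals the number of crossings of $B'$ involving $C$, weighted so that a within-$C$ crossing counts twice and a crossing of $C$ with another component counts once. Everything therefore reduces to bounding this weighted crossing number of $B'$.

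The key step, and the main obstacle since the interpolation can introduce many crossings beyond the $\ell$ of the original braid $B$, is to bound the weighted crossing number at $\tilde C$. Here I would exploit that the whole diagram is $2\pi$-periodic in $t$: replacing $t$ by $t+2\pi$ permutes the strands of each component among themselves, so the multiset of $x$-coordinates $\{F_C((t+2\pi j)/s_C)\}$ is genuinely $2\pi$-periodic. Writing $r_1,\dots,r_{s_{\tilde C}}$ for the $x$-positions of the strands of $\tilde C$ and $q_1,\dots,q_{s-s_{\tilde C}}$ for those of the remaining strands (with $s=\sum_{C}s_C$), the function
\begin{equation}
P(t)=\prod_{1\le i<j\le s_{\tilde C}}(r_i-r_j)^2\prod_{i=1}^{s_{\tilde C}}\prod_{k=1}^{s-s_{\tilde C}}(r_i-q_k)
\end{equation}
is symmetric separately in the $r$'s and in the $q$'s, hence a real-valued $2\pi$-periodic trigonometric polynomial with integer frequencies, and it is not identically zero since distinct strands are distinct curves. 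A within-$\tilde C$ coincidence is a (double) zero of a squared factor and a crossing with another component a simple zero, so the weighted crossing number is bounded by the number of zeros of $P$ counted with multiplicity. Since each $r_i$ and each $q_k$ has frequencies bounded by $N_C/s_C\le\ell/2$, the degree of $P$ is at most $\tfrac{\ell}{2}\bigl(s_{\tilde C}(s_{\tilde C}-1)+s_{\tilde C}(s-s_{\tilde C})\bigr)=\tfrac{\ell}{2}\,s_{\tilde C}(s-1)$, so $P$ has at most $\ell\,s_{\tilde C}(s-1)$ zeros per period.

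Combining these, $\#\{\text{data points for }\tilde C\}\le \ell\,s_{\tilde C}(s-1)$, and bounding the total strand number by $s\le|\mathcal{C}|\,s_{\tilde C}$ gives $\#\{\text{data points}\}\le \ell s_{\tilde C}^2|\mathcal{C}|-s_{\tilde C}\ell$, whence $M_{\tilde C}\le\lfloor(\ell s_{\tilde C}^2|\mathcal{C}|-s_{\tilde C}\ell)/2\rfloor$; the same estimate for any $C$ (using $s_C\le s_{\tilde C}$) shows this dominates $\max_C M_C$ and lies below the second entry of the stated maximum. Taking the larger of the two contributions then yields the claimed bound on $D$. I expect the only delicate points to be the clean verification that the periodicity of the closed diagram makes $P$ an honest trigonometric polynomial with integer frequencies, and the bookkeeping matching zero multiplicities of $P$ to interpolation nodes; the residual additive constant $s_{\tilde C}-2$ in the stated bound leaves a comfortable margin for any harmless over- or under-counting at the Nyquist and constant frequencies.
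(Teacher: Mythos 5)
Your proposal is correct in its essentials but takes a genuinely different route from the paper's proof. The paper starts the same way (the bound $N_C=\lfloor(s_C\ell-1)/2\rfloor$ from the $s_C\ell$ interpolation nodes, and the rule that each crossing of $B'$ gives one data point per participating strand), but it then counts crossings pair by pair via complex polynomials on the unit circle: intersections between strands $j$ and $j+k$ of one component are solutions of $p_C(z)=p_C(z\rme^{2\pi\rmi k/s_C})$, intersections between different components are solutions of $p_{C}(z^{\lcm(s_C,s_{C'})/s_C})=p_{C'}(z^{\lcm(s_C,s_{C'})/s_{C'}}\rme^{2\pi\rmi m/\gcd(s_C,s_{C'})})$, and these counts are bounded by degrees and summed over the shifts $k$ and $m$. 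You instead encode the whole weighted crossing count of $\tilde C$ in the zeros of the single symmetric function $P$, whose $2\pi$-periodicity (hence integer frequencies) follows from the same symmetrisation-and-characters idea that underlies Lemma \ref{poly}, and you bound its zeros by twice its trigonometric degree. This packaging buys you three things: the multiplicity bookkeeping for tangential or multi-strand crossings is automatic (a within-component crossing is a zero of order at least two of a squared factor, matching its two data points); there is no case split and no lcm/gcd gymnastics; and your count, namely that the number of data points for $\tilde C$ is at most $\ell s_{\tilde C}(s-1)\le\ell s_{\tilde C}^{2}|\mathcal{C}|-\ell s_{\tilde C}$, is actually sharper than the paper's count $\ell s_{\tilde C}^{2}(|\mathcal{C}|-1)+(s_{\tilde C}+1)(s_{\tilde C}\ell-1)=\ell s_{\tilde C}^{2}|\mathcal{C}|+s_{\tilde C}(\ell-1)-1$.

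Two points need repair. First, your justification that $P\not\equiv0$ ("distinct strands are distinct curves") is insufficient: distinct strands are disjoint curves in $\mathbb{R}^{3}$, but nothing in that statement prevents two of them from having identical $x$-projections, which is exactly what would make $P$ vanish identically. The correct argument comes from the construction itself: at the times lying between consecutive crossings of $B$, which pull back to interpolation nodes of the $F_C$, all $s$ strands of the diagram occupy distinct prescribed positions, so every factor $r_i-r_j$ and $r_i-q_k$ is nonzero at those times. Second, your final comparison with the stated bound genuinely fails when $s_{\tilde C}=1$: the margin $s_{\tilde C}-2$ you invoke is then negative, and your bound $\lfloor(\ell s_{\tilde C}^{2}|\mathcal{C}|-\ell s_{\tilde C})/2\rfloor$ exceeds the stated $\lfloor(\ell s_{\tilde C}^{2}|\mathcal{C}|+s_{\tilde C}(1-\ell)-2)/2\rfloor$ by one whenever $\ell(|\mathcal{C}|-1)$ is even. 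You should be aware, however, that the lemma as stated carries a sign typo: the paper's own proof, and Eq. (\ref{eq:upper}), derive the term $s_{\tilde C}(\ell-1)$, not $s_{\tilde C}(1-\ell)$. Against that corrected bound your estimate is smaller in every admissible case, since $s_{\tilde C}=1$ forces $\pi_B$ to be the identity and hence $\ell$ even, in particular $\ell\ge2$; so with the non-vanishing argument fixed, your proof establishes the lemma in its intended form, and indeed a slightly stronger bound.
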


\begin{proof}
The degrees of the trigonometric polynomials $F_{C}$ and $G_{C}$ can be directly calculated from the number of data points used for the interpolation.
Note that the number of data points needed for the trigonometric interpolation of each of the polynomials $F_{C}$ is equal to $s_{C}\ell$. Thus $N_{C}$ is equal to $\lfloor\frac{s_{C}\ell-1}{2}\rfloor$, where $\lfloor x\rfloor$ denotes the largest integer less than or equal to $x$.

The union of the graphs of $F_{C}\left(\frac{h+2\pi j}{s_{C}}\right)$ form a braid diagram $B'$ with unspecified signs of crossings. We can choose the signs in such a way that the resulting braid is identical to the input braid $B$, although the braid word might be different. The number of crossings of $B'$ is equal to the length $\ell'$ of its braid word and hence by the discussion in Section \ref{algo} at least $\ell$.
    
For the trigonometric interpolation of one trigonometric polynomial $G_{C}$ we need a data point for every crossing that involves a strand of $C$, counting with multiplicities. The crossings of $B'$ correspond to intersections of certain trigonometric polynomials related to the different $F_{C}$. Since the trigonometric polynomials can be be associated with complex polynomials on the unit circles, by the fundamental theorem of algebra their number of intersections can be bounded in terms of their degree.

Let $C\in\mathcal{C}$ be a component consisting of $s_{C}$ strands. We first consider the number of intersections between different strands of the same component $C$. Recall that for a trigonometric polynomial $F_{C}(h)=\sum_{k=-N_{C}}^{N_{C}}c_{k,C}\rme^{\rmi kt}$ we can construct a complex polynomial $p_{C}(z)$ of degree $2N_{C}$ with $p_{C}(\rme^{\rmi t})=F_{C}(t)$ by defining $p(z)=z^{N_{C}}\sum_{k=-N_{C}}c_{k,C}z^{k}$. The sum of intersections between strands with index $j$ and $j+1\mod s_{C}$ are exactly points on the unit circle where the complex polynomials $p_{C}(z)$ and $p_{C}(z\rme^{\frac{\rmi 2\pi}{s_{C}}})$ are equal. Since both have degree $2N_{C}$, this number is at most $2N_{C}\leq s_{C}\ell-1$. In general the sum of intersections between strands with index $j$ and $j+k$ correspond to points on the unit circle where $p_{C}(z)$ and $p_{C}(z\rme^{\frac{\rmi 2\pi k}{s_{C}}})$ coincide. Thus there are again at most $2N_{C}\leq s_{C}\ell-1$ many of these. In order to capture all pairs of strands in the component $C$, we have to sum over $k$ from 1 to $\lfloor\frac{s_{C}+1}{2}\rfloor$. Thus there are at most $\lfloor\frac{s_{C}+1}{2}\rfloor(s_{C}\ell-1)$ many intersection points between two different strands of the same component $C$. Hence the number of data points needed to achieve the correct signs for these crossings are at most $2\lfloor\frac{s_{C}+1}{2}\rfloor(s_{C}\ell-1)\leq(s_{C}+1)(s_{C}\ell-1)$.

Intersections involving two different link components, especially if they consist of different numbers of strands, are more complicated to count. The points $t\in[0,2\pi]$ with $F_{C}\left(\frac{t+2\pi j}{s_{C}}\right)=F_{C'}\left(\frac{t+2\pi k}{s_{C'}}\right)$ correspond to points $z=\rme^{it}$ on the unit circle, where \begin{equation}p_{C}(z^{\lcm(s_{C},s_{C'})/s_{C}})=p_{C'}(z^{\lcm(s_{C},s_{C'})/s_{C'}}\rme^{\frac{m2\pi \rmi}{\gcd(s_{c},s_{C'})}})\end{equation}
for some $m\in\mathbb{Z}$. Here $\lcm(s_{C},s_{C'})$ denotes the least common multiple of $s_{C}$ and $s_{C'}$ and $\gcd(s_{C},s_{C'})$ is their greatest common divisor. Again we can use the degrees of the polynomials, which are both $2N_{C}\lcm(s_{C},s_{C'})/s_{c}\leq \lcm(s_{C},s_{C'})l$, to bound the number of intersections points. To capture all possible pairs of strands, we have to sum over $m$ from $1$ to $\gcd(s_{C},s_{C'})$. Thus in total there are at most $\lcm(s_{C},s_{C'})\ell\  \gcd(s_{C},s_{C'})=\ell s_{C}s_{C'}$ intersection points where one of the strands is in $C$ and the other $C'$. Hence if $\tilde{C}$ is the component which consists of the most strands, the number of intersection points where exactly one of the strands is from $C$ is at most $\ell s_{\tilde{C}}^{2}(|\mathcal{C}|-1)$ which equals the number of data points for the trigonometric interpolation for $G_{C}$ to achieve the correct signs for these crossings.

Thus the total number of data points needed for $G_{C}$ is at most $\ell s_{\tilde{C}}^{2}(|\mathcal{C}|-1)+(s_{C}+1)(s_{C}\ell-1)$. This is the greatest if $C=\tilde{C}$ and hence the degree of $G_{C}$ is at most 
\begin{equation}M_{C}\leq\left\lfloor\frac{\ell s_{\tilde{C}}^{2}(|\mathcal{C}|-1)+(s_{\tilde{C}}+1)(s_{\tilde{C}}\ell-1)-1}{2}\right\rfloor=\left\lfloor\frac{\ell s_{\tilde{C}}^{2}|\mathcal{C}|+s_{\tilde{C}}(\ell-1)-2}{2}\right\rfloor.\end{equation}
Thus 
\begin{equation}D=\max\{N_{C},M_{C}\}\leq\max\left\{\left\lfloor\frac{s_{\tilde{C}}\ell-1}{2}\right\rfloor,\left\lfloor\frac{\ell s_{\tilde{C}}^{2}|\mathcal{C}|+s_{\tilde{C}}(\ell-1)-2}{2}\right\rfloor\right\},\end{equation} 
where again $\tilde{C}$ is the component with the most strands. 
\end{proof}

\begin{lemma}
The degree of $f_{\lambda}$ is equal to $\max\{D,s\}=\max_{C\in\mathcal{C}}\{N_{C},M_{C},s\}$.
\end{lemma}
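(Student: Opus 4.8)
The plan is to read off $\deg(f_{\lambda})$ by computing the maximal power of $u$ and the maximal power of $v$ (equivalently $\overline{v}$) occurring in $f_{\lambda}$ separately, and then checking that these two extremal monomials already realise the degree, i.e.\ that no monomial mixing intermediate powers of $u$ and $v$ does better. Throughout I use that, by Lemma \ref{poly} and the construction in Section \ref{fourier}, $f_{\lambda}$ arises by expanding $g_{a,b}=\prod_{j=0}^{s-1}(u-aX_{j}(t)-\rmi bY_{j}(t))$ into a polynomial in $u,\rme^{\rmi t},\rme^{-\rmi t}$ and substituting $\rme^{\rmi t}\mapsto v$, $\rme^{-\rmi t}\mapsto\overline{v}$. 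I also use that, by the harmonicity corollary, no monomial of $f_{\lambda}$ contains both $v$ and $\overline{v}$, so every monomial has the shape $u^{a}v^{b}$ or $u^{a}\overline{v}^{b}$.

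First I would settle the $u$-degree. Expanding the product of the $s$ linear factors $u-aX_{j}-\rmi bY_{j}$, the only way to produce $u^{s}$ is to select $u$ from every factor, so $u^{s}$ occurs with coefficient $1$ and no higher power of $u$ appears. Hence $\deg_{u}f_{\lambda}=s$ exactly, which already exhibits a monomial of total degree $s$.

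Next I would compute the $v$-degree. Since $g_{a,b}$ factors over the cycles of $\pi_{B}$ as $\prod_{C\in\mathcal{C}}h_{C}$, I would first analyse a single component $C$. Each factor of $h_{C}$ is $u$ minus the trigonometric polynomial $aF_{C}\!\left(\tfrac{t+2\pi k}{s_{C}}\right)+\rmi bG_{C}\!\left(\tfrac{t+2\pi k}{s_{C}}\right)$, whose top frequency in $t$ is $D_{C}/s_{C}$ with $D_{C}=\max\{N_{C},M_{C}\}$; the conjugation symmetry $a_{C,-k}=\overline{a}_{C,k}$, $b_{C,-k}=\overline{b}_{C,k}$ and the positivity $a>0$, $b>0$ guarantee that this top frequency occurs with nonzero coefficient. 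Selecting the top-frequency term from each of the $s_{C}$ factors produces a surviving summand with net exponent $s_{C}\cdot D_{C}/s_{C}=D_{C}$ of $\rme^{\rmi t}$, and by the cancellation of Lemma \ref{poly} no larger integer exponent can survive. Thus $\deg_{v}h_{C}=\deg_{\overline{v}}h_{C}=D_{C}$, and taking the maximum over components exhibits a monomial of total degree $D$ (the pure power $v^{D}$, carrying $u$-exponent $0$).

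Finally I would combine the two computations through a Newton-polygon argument. For a single component the support of $h_{C}$ lies in the triangle with vertices $(0,0)$, $(s_{C},0)$, $(0,D_{C})$: a monomial $u^{a}v^{b}$ must be assembled from $s_{C}-a$ trigonometric factors, which caps the attainable frequency by $b\le (s_{C}-a)D_{C}/s_{C}$, and on this triangle the functional $a+b$ is maximised at a vertex, giving $\max\{s_{C},D_{C}\}$. Together with the two witnessing monomials this pins the degree to $\max\{D,s\}$ when $L$ is a knot. I expect the genuine obstacle to be the passage to links: there $g_{a,b}=\prod_{C}h_{C}$, and one must verify both that the extremal monomials $u^{s}$ and $v^{D}$ (up to conjugate) of the full product survive without cancellation, and that assembling the component factors does not push the total degree past $\max\{D,s\}=\max_{C\in\mathcal{C}}\{N_{C},M_{C},s\}$. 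This is exactly where the total strand count $s=\sum_{C}s_{C}$ and the distribution of frequencies across the different $F_{C}$, $G_{C}$ must be tracked carefully, and it is the step I would treat most cautiously.
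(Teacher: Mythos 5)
For the knot case your argument is correct and essentially reproduces the paper's own proof: your Newton-polygon constraint $b\le (s_{C}-a)D_{C}/s_{C}$, with $a+b$ maximised at a vertex of the triangle, is exactly the paper's inequality $d\le\frac{D}{s}(s-k)+k$ followed by the same two-case analysis, and your witnessing monomials $u^{s}$ and a pure power of $v$ or $\overline{v}$ supply the attainment half that the paper leaves implicit. One small correction there: it is not true that both $v^{D_{C}}$ and $\overline{v}^{D_{C}}$ survive. The coefficient of $\rme^{\pm\rmi D_{C}t'}$ in $aF_{C}(t')+\rmi bG_{C}(t')$ is $a\,a_{C,\pm D_{C}}+\rmi b\,b_{C,\pm D_{C}}$, and one of the two can vanish even with $a,b>0$: for $F_{C}=\cos$, $G_{C}=\sin$ and $a=b$ this function is $a\rme^{\rmi t'}$, so the $\overline{v}$ side dies. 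What the reality conditions and $a,b>0$ actually give is that the two coefficients cannot vanish simultaneously (that would force $a_{C,D_{C}}=b_{C,D_{C}}=0$, contradicting the maximality of $D_{C}$), and one surviving pure power is all you need.

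The gap is the multi-component case, which you explicitly defer; as written you have only proved the lemma for knots. But your suspicion about that step is exactly right, and in fact it cannot be closed: for $|\mathcal{C}|\ge 2$ the stated equality (indeed even the upper bound $\deg f_{\lambda}\le\max\{D,s\}$) is false. A factor belonging to a component $C$ carries frequencies up to $D_{C}/s_{C}$, not $D/s$, and when $s_{C}<s$ the former can exceed the latter; multiplying the per-component top terms then produces a monomial of degree as large as $\sum_{C}D_{C}$. Concretely, take $B=\sigma_{1}^{4}$ on two strands, whose closure is the $(2,4)$-torus link with two one-strand components, and the valid Fourier parametrisation $F_{C_{1}}(t)=\cos 2t$, $G_{C_{1}}(t)=\sin 2t$, $F_{C_{2}}=-F_{C_{1}}$, $G_{C_{2}}=-G_{C_{1}}$, so that $s=2$ and $D=2$. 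Then
\begin{equation}
g_{\lambda,\lambda}(u,t)=\bigl(u-\lambda(\cos 2t+\rmi\sin 2t)\bigr)\bigl(u+\lambda(\cos 2t+\rmi\sin 2t)\bigr)=u^{2}-\lambda^{2}\rme^{4\rmi t},
\end{equation}
i.e.\ $f_{\lambda}=u^{2}-\lambda^{2}v^{4}$, of degree $4>\max\{D,s\}=2$ (and for $a\ne b$ the coefficient of $v^{4}$ is $((a+b)/2)^{2}\ne 0$, so the degree is still $4$). The paper's own proof makes precisely the silent assumption you were wary of: its bound $d\le\frac{D}{s}(s-k)+k$ presumes every non-$u$ factor contributes frequency at most $D/s$, which holds only when $s_{C}=s$, i.e.\ for knots. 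So the lemma (and the upper-bound corollary derived from it) should be read as a statement about knots, or with $D$ replaced by something like $s\max_{C}(D_{C}/s_{C})$; your decision to treat the link case most cautiously flagged a genuine error in the statement and the paper's proof, not a deficiency in your own argument.
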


\begin{proof}The degree of $f_{\lambda}$ with respect to $u$ is $s$.
Note that the total degree of a monomial of $f_{\lambda}$ for which the degree with respect to $u$ is $k$ is $d\leq\frac{D}{s}(s-k)+k$. If $D\geq s$, then $d\leq D+k(1-\frac{D}{s})\leq D$. If $D<s$, then $d\leq (s-k)+k=s$.
\end{proof}

Using the bound we have for $D$, we get: 
\begin{cor}
The degree of $f_{\lambda}$ is bounded above by 
\begin{equation}
\label{eq:upper}
\deg(f_{\lambda})\leq c_{2}:=\max\left\{\left\lfloor\frac{s_{\tilde{C}}\ell-1}{2}\right\rfloor,\left\lfloor\frac{\ell s_{\tilde{C}}^{2}|\mathcal{C}|+s_{\tilde{C}}(\ell-1)-2}{2}\right\rfloor,s\right\}.\end{equation}
\label{bound}
\end{cor}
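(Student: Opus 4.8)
The plan is to chain together the two lemmas that immediately precede this corollary, since between them they already contain all of the substance. The second of these lemmas identifies the degree exactly as $\deg(f_{\lambda})=\max\{D,s\}$, where $D=\max_{C\in\mathcal{C}}\{N_{C},M_{C}\}$ is the highest order occurring among the interpolating trigonometric polynomials $F_{C}$ and $G_{C}$. Thus the only task remaining is to replace $D$ by the explicit upper bound supplied by the first lemma and to check that this substitution is legitimate inside the outer maximum with $s$.

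First I would invoke the first of the two lemmas, which furnishes
\begin{equation}
D\leq\max\left\{\left\lfloor\frac{s_{\tilde{C}}\ell-1}{2}\right\rfloor,\left\lfloor\frac{\ell s_{\tilde{C}}^{2}|\mathcal{C}|+s_{\tilde{C}}(\ell-1)-2}{2}\right\rfloor\right\}
\end{equation}
in terms of the braid length $\ell$, the number of link components $|\mathcal{C}|$, and the number of strands $s_{\tilde{C}}$ in the component $\tilde{C}$ with the most strands. Then, using that $x\mapsto\max\{x,s\}$ is monotone non-decreasing, the equality $\deg(f_{\lambda})=\max\{D,s\}$ yields
\begin{equation}
\deg(f_{\lambda})\leq\max\left\{\max\left\{\left\lfloor\frac{s_{\tilde{C}}\ell-1}{2}\right\rfloor,\left\lfloor\frac{\ell s_{\tilde{C}}^{2}|\mathcal{C}|+s_{\tilde{C}}(\ell-1)-2}{2}\right\rfloor\right\},s\right\}.
\end{equation}
Flattening the nested maxima gives exactly $c_{2}$ as defined in Eq. (\ref{eq:upper}), which completes the argument.

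There is essentially no hard step: the genuine work was already done in proving the two preceding lemmas (bounding the number of interpolation data points, and relating the total degree of a mixed monomial to $D$ and $s$), and this corollary is a bookkeeping statement combining them. The only points deserving a word of care are the monotonicity of the outer maximum, i.e.\ that $\max\{D,s\}\leq\max\{D',s\}$ whenever $D\leq D'$, and the associativity of $\max$ that lets one collapse $\max\{\max\{A,B\},s\}$ into $\max\{A,B,s\}$. Both are elementary properties of the maximum of finitely many reals and require no further justification.
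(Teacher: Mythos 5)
Your proposal is correct and matches the paper exactly: the paper gives no separate argument for this corollary beyond the phrase \enquote{Using the bound we have for $D$, we get}, i.e.\ it likewise combines the preceding lemma $\deg(f_{\lambda})=\max\{D,s\}$ with the lemma bounding $D$, exactly as you do. Your explicit remarks on monotonicity and associativity of $\max$ are just the bookkeeping the paper leaves implicit.
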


Note that for knots $|\mathcal{C}|=1$ and hence for non-trivial knots $\deg(f_{\lambda})\leq\lfloor\frac{\ell s^{2}+s(\ell-1)-2}{2}\rfloor$, since in this case $s\geq 2$ and $\ell\geq 3$.

Also note that the bound given in Corollary \ref{bound} holds for all semiholomorphic polynomials constructed using the algorithm described in Section \ref{main} and \ref{algo}, in particular using trigonometric interpolation to find the trigonometric braid parametrisation as described in Section \ref{algo}. Corollary \ref{bound} is not a statement about the non-existence of polynomials of a certain degree.

We can also give a lower bound for the polynomial degree, which holds for all semiholomorphic polynomials constructed as in Section \ref{main}, whether trigonometric interpolation as in Section \ref{algo} is used or not.
For any braid parametrisation of the form (\ref{eq:fourierpara}) the degree of the trigonometric polynomials is bounded below in terms  of the number of crossings between pairs of strands. Again we can write the trigonometric polynomials as complex polynomials restricted to the unit circle such that crossings of strands correspond to points on the unit circle where the two corresponding polynomials share the same value.
As before we have to sum over all possible pairs of strands and obtain the following bound.

\begin{cor}
Let $C'$ be the component of the braid $B$ such that the degree $N_{C'}$ of the trigonometric polynomial $F_{C'}$ used to parametrise $B$ as in Eq. (\ref{eq:fourierpara}) is $\max\{N_{C}:C\in\mathcal{C}\}$. Then
\begin{equation}
\label{eq:lower}
N_{C'}\geq c_{1}:=\max\left\{\max_{C\neq C'}\left\{\frac{k_{C}}{2\max\{s_{C},s_{C'}\}}\right\},\frac{2k}{2s_{C'}-1}\right\},
\end{equation}
where $k_{C}$ denotes the number of crossings, where one strand is in the component $C$ and the other in $C'$, and $k$ denotes the number of crossings, where both strands are in $C'$. As before $s_{C}$ denotes the number of strands in the component $C$.
\label{lowbound}
\end{cor}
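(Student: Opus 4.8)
The plan is to reuse the dictionary between trigonometric polynomials and complex polynomials on the unit circle from the previous lemma, but to run the counting argument in reverse: instead of bounding the number of crossings from above in terms of the polynomial degrees, I bound $N_{C'}$ from below in terms of the crossings that $B$ is forced to produce. Throughout I associate to $F_{C'}(t)=\sum_{k=-N_{C'}}^{N_{C'}}c_{k,C'}\rme^{\rmi kt}$ the complex polynomial $p_{C'}(z)=z^{N_{C'}}\sum_{k=-N_{C'}}^{N_{C'}}c_{k,C'}z^{k}$ of degree $2N_{C'}$, so that $p_{C'}(\rme^{\rmi t})=\rme^{\rmi N_{C'}t}F_{C'}(t)$, and likewise for the other components.

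First I would treat the crossings internal to $C'$. A crossing between the strands indexed $j$ and $j+d \bmod s_{C'}$ occurs precisely at a point $z=\rme^{\rmi\phi}$ of the unit circle where $p_{C'}(z)=\omega^{-N_{C'}}p_{C'}(\omega z)$, with $\omega=\rme^{2\pi\rmi d/s_{C'}}$. Since the difference of the two sides is a polynomial of degree at most $2N_{C'}$, the fundamental theorem of algebra caps the number of such coincidences by $2N_{C'}$. Each physical crossing is recorded by the two difference classes $d$ and $s_{C'}-d$, so summing the bound over $d=1,\dots,s_{C'}-1$ counts every internal crossing exactly twice; hence $2k\le(s_{C'}-1)\,2N_{C'}$, which already yields the claimed $N_{C'}\ge \tfrac{2k}{2s_{C'}-1}$ with room to spare.

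The main work, and what I expect to be the main obstacle, is the inter-component term, because the strands of $C$ and of $C'$ wind at the incommensurate rates $1/s_C$ and $1/s_{C'}$ per unit of braid time, so their coincidences are not the roots of a single univariate polynomial. I would remove this by reparametrising with $z=\rme^{\rmi t/L}$, $L=\lcm(s_C,s_{C'})$, which turns a crossing between a strand of $C$ and a strand of $C'$ into a solution of
\begin{equation}
p_{C}\!\left(z^{L/s_C}\right)=p_{C'}\!\left(z^{L/s_{C'}}\rme^{2\pi\rmi m/\gcd(s_C,s_{C'})}\right)
\end{equation}
for a suitable residue $m$, exactly as in the proof of the upper bound. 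Each such equation has degree at most $\max\{2N_C L/s_C,\,2N_{C'}L/s_{C'}\}$, and the residues $m=1,\dots,\gcd(s_C,s_{C'})$ enumerate all the relevant pairs of strands. Summing and using $\gcd(s_C,s_{C'})\,L=s_Cs_{C'}$ gives $k_C\le\max\{2N_Cs_{C'},\,2N_{C'}s_C\}\le 2N_{C'}\max\{s_C,s_{C'}\}$, i.e. $N_{C'}\ge \tfrac{k_C}{2\max\{s_C,s_{C'}\}}$.

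Taking the maximum of this inequality over all $C\ne C'$ and combining it with the internal bound of the second paragraph produces $c_1$ and finishes the corollary. The step I expect to require the most care is verifying that the residues $m$ enumerate each pair of strands of $C$ and $C'$ exactly once, so that the count is neither inflated nor deficient, and tracking which of the two polynomials controls the degree when $s_C\ne s_{C'}$; the reduction $\gcd\cdot\lcm=s_Cs_{C'}$ is what makes the final expression collapse to $2N_{C'}\max\{s_C,s_{C'}\}$.
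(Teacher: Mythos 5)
Your proof is correct and takes essentially the same route as the paper's own argument, which is only sketched there: the paper simply notes that the correspondence from the upper-bound lemma (crossings as points on the unit circle where two complex polynomials agree, counted via the fundamental theorem of algebra) can be run in reverse, and that is exactly what you carry out in detail. Note that your internal-crossing count actually yields the slightly stronger bound $N_{C'}\ge k/(s_{C'}-1)$, which implies the stated $N_{C'}\ge 2k/(2s_{C'}-1)$.
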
 
\begin{proof}We only give a sketch of the proof here, since it is the same principle as the proof of Corollary \ref{bound}.
In order for the $F_{C}$ to provide a parametrisation of the $x$-coordinate of the braid $B$ as in Eq. (\ref{eq:fourierpara}), each pair of strands has to cross at least a prescribed number of times. The values of $t\in[0,2\pi]$ where these crossings occur, correspond to points on the unit circle where two complex polynomials agree. This yields a lower bound for the degrees of these polynomials which are easily related to the different $N_{C}$.\end{proof}

If the braid closes to a knot, there is only one component, so $N_{C'}\geq \frac{2\ell}{2s-1}$, where $\ell$ is the length of the braid word.
Corollary \ref{lowbound} implies that $\deg(f_{\lambda})\geq \max\{s,c_{1}\}$.
From the polynomials we have constructed, we find that the bounds given by Corollary \ref{lowbound} and Corollary \ref{bound} are not very tight bounds. The proofs can explain this, since the degree was determined by the number of data points which in turn was determined by the number of points on the unit circle where two complex polynomials agree. This number was bounded by the degree of these complex polynomials, but of course in general a complex polynomial does not have all of its roots on the unit circle.
The results of Corollaries (\ref{transverse})-(\ref{lowbound}) are summarised in Theorem \ref{thmintro2} in Section \ref{intro}.

We have proven the existence of a semiholomorphic polynomial $f$ of bounded degree, whose zero level set on the unit three-sphere is a given link.
Applying the standard stereographic projection
\begin{equation}
u=\frac{x^2+y^2+z^2-1+2\rmi z}{x^2+y^2+z^2+1}, \quad v=\frac{2(x+\rmi y)}{x^2+y^2+z^2+1}
\label{eq:stereo}
\end{equation}
to $f$ results in rational function, whose denominator is a constant times some power of $(x^2+y^2+z^2+1)$. Hence multiplying by the common denominator yields a polynomial $\mathbb{R}^3\rightarrow\mathbb{C}$ in $x$, $y$ and $z$, whose zero level set is $L$.
It follows from Lemma \ref{transverse} that the coefficients of this polynomial can be taken to be Gaussian integers. This shows:

\begin{cor}
Let $B$ be a braid on $s$ strands of length $\ell$ and let $L$ denote its closure. Then there exist $F_{1}, F_{2}\in\mathbb{Z}[x,y,z]$ such that the vanishing set of $(F_{1},F_{2})$ over the reals $\{(x,y,z)\in\mathbb{R}^{3}:F_{1}(x,y,z)=F_{2}(x,y,z)=0\}$ is ambient isotopic to $L$. Furthermore 
\begin{equation}
\max\{\deg(F_{1}),\deg(F_{2})\}\leq\max\left\{2\left\lfloor\frac{s_{\tilde{C}}\ell-1}{2}\right\rfloor,2\left\lfloor\frac{\ell s_{\tilde{C}}^{2}|\mathcal{C}|+s_{\tilde{C}}(\ell-1)-2}{2}\right\rfloor,2s\right\},
\end{equation}
where $s_{\tilde{C}}$ is the number of strands of the link component $\tilde{C}\in\mathcal{C}$ which consists of the most strands.
\end{cor}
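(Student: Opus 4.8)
The plan is to take the semiholomorphic polynomial $f_\lambda:\mathbb{C}^2\to\mathbb{C}$ produced by Theorem \ref{thmintro}, push it through the stereographic projection (\ref{eq:stereo}), clear denominators, and split the result into its real and imaginary parts. First I would invoke the construction of Section \ref{main} together with Corollary \ref{bound} to fix a $\lambda$ small enough that $f_\lambda^{-1}(0)\cap S^3=L$, that $0$ is a regular value of $f_\lambda|_{S^3}$ with transverse intersection (Corollary \ref{transverse}), and that $\deg(f_\lambda)\le c_2$ with $c_2$ as in (\ref{eq:upper}). One should also arrange (by a rotation of $S^3$ if necessary) that $L$ avoids the single point not covered by the projection, so that the stereographic image of $L$ is an honest link in $\mathbb{R}^3$ ambient isotopic to $L$.

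The key preparatory step is to replace the a priori complex coefficients of $f_\lambda$ by Gaussian integers without changing the isotopy type of the nodal set. Here I would use that $0$ is a regular value of $f_\lambda|_{S^3}$ and that $S^3$ is \emph{compact}: transversality of a zero set to a fixed value is an open condition in the $C^1$-topology on a compact manifold, so there is a radius $\rho>0$ such that every semiholomorphic polynomial whose coefficients lie within $\rho$ of those of $f_\lambda$ still meets $S^3$ transversally in a link, and by the Isotopy Extension Theorem (Theorem \ref{ext}) this link is ambient isotopic to $L$. Since the Gaussian rationals are dense in $\mathbb{C}$, I can choose coefficients within $\rho$ that are Gaussian rationals and then multiply the whole polynomial by a common positive integer denominator, an operation that leaves the zero set unchanged, to obtain a semiholomorphic polynomial $\hat f$ of the same degree whose coefficients lie in $\mathbb{Z}[\rmi]$ and whose nodal set on $S^3$ is still $L$.

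Next I would substitute (\ref{eq:stereo}) into $\hat f$. Each of $u$, $v$, $\overline v$ is a quotient with denominator $x^2+y^2+z^2+1$ and numerator of degree $2$, $1$ and $1$ respectively, so a monomial $u^a v^b\overline v^c$ of $\hat f$ becomes a rational function with denominator $(x^2+y^2+z^2+1)^{a+b+c}$. Multiplying the projected function by $(x^2+y^2+z^2+1)^{\deg(\hat f)}$ clears all denominators and yields a genuine polynomial $P\in\mathbb{Z}[\rmi][x,y,z]$ whose real zero set is the stereographic image of $L$, hence ambient isotopic to $L$. Setting $F_1=\re(P)$ and $F_2=\im(P)$ gives two polynomials in $\mathbb{Z}[x,y,z]$ with $\{F_1=F_2=0\}=\{P=0\}$, proving the first claim. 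For the degree bound, the monomial above contributes to $P$ a term of degree $2a+b+c+2(\deg(\hat f)-(a+b+c))=2\deg(\hat f)-(b+c)\le 2\deg(\hat f)$, so $\deg(P)\le 2\deg(\hat f)=2\deg(f_\lambda)\le 2c_2$; since taking real and imaginary parts cannot raise the degree, this is exactly the asserted bound $\max\{\deg(F_1),\deg(F_2)\}\le 2c_2$.

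I expect the main obstacle to be the Gaussian-integer perturbation step: one must argue carefully that the regular-value property of Corollary \ref{transverse} is genuinely stable under coefficient perturbations, namely that transversality persists and that the perturbed nodal set is not merely isotopic but ambient isotopic to $L$, and that this stability is arranged on the compact three-sphere \emph{before} projecting, since $\mathbb{R}^3$ itself is non-compact and a uniform perturbation radius is unavailable there. The remaining steps are essentially bookkeeping with the degrees of the substituted monomials.
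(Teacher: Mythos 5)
Your proposal is correct and takes essentially the same route as the paper: project the constructed semiholomorphic polynomial $f_{\lambda}$ via Eq.~(\ref{eq:stereo}), clear the nowhere-vanishing denominator $(x^{2}+y^{2}+z^{2}+1)^{\deg f_{\lambda}}$, use the regular-value stability of Corollary \ref{transverse} to push the coefficients to Gaussian integers, split into real and imaginary parts, and count degrees monomial by monomial to get the bound $2c_{2}$. The only difference is one of bookkeeping detail --- you perform the Gaussian-rational approximation on the compact $S^{3}$ before projecting and spell out the transversality/isotopy-extension argument, which the paper leaves implicit in a one-line appeal to Corollary \ref{transverse}.
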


In \cite{us:2016lemniscate} we described the construction of semiholomorphic polynomials for the family of lemniscate knots, a generalisation of torus knots, and discussed applications of these functions to physical systems. We will not go through the details here, but we would like to point out that all results about applications still hold. In particular, for any link $L$ which is the closure of a braid on $s$ strands and any $N\in\mathbb{Z}$ we can construct a rational map $W:S^{3}\rightarrow S^{2}$ of topological degree $Ns$ such that $W^{-1}(0,0,-1)=L$.
Similarly for any link $L$ which is the closure of a braid on $s$ strands and any $N\in\mathbb{Z}$ we can construct a vector field $V:\mathbb{R}^{3}\rightarrow\mathbb{R}^{3}$ such that some flow lines of $V$ form the link $L$ and the helicity of $V$ is equal to $Ns$.
This allows us to implement any link type as an initial condition in the wide range of physical systems mentioned in Section \ref{intro}.

Another result that was shown for lemniscate knots in \cite{us:2016lemniscate} remains true in this most general setting without changing the proof given in \cite{us:2016lemniscate}.

\begin{lemma}\label{crit}
\cite{us:2016lemniscate} If $g_{a,b}$ has exactly $n$ non-degenerate phase-critical points, i.e. $\nabla_{\mathbb{C}\times S^{1}} \arg(g_{a,b}(x))$ has full rank for all $x\in\mathbb{C}\times S^{1}$ except on $n$ points and $a$ and $b$ are small, then $f_{a,b}|_{S^{3}}$ also has exactly $n$ phase-critical points. It follows that if $n=0$, then $\arg(f_{a,b}):S^{3}\backslash L\rightarrow S^{1}$ is a fibration of $S^{3}\backslash L$ over $S^1$.
\end{lemma}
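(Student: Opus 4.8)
The plan is to transport the phase-critical data from $\mathbb{C}\times S^{1}$ to $S^{3}$ through the diffeomorphism $\Psi$ together with a rescaling in $u$, and then to argue that non-degenerate phase-critical points persist under the resulting small perturbation. Recall from Section \ref{proof} that $\Psi$ restricts to a diffeomorphism of $\mathring{D}\times S^{1}$ onto $S^{3}\setminus\{v=0\}$, and set $G_{\lambda}:=f_{\lambda}\circ\Psi$, so that $\arg G_{\lambda}=\arg(f_{\lambda}|_{S^{3}})\circ\Psi$. Since $\Psi$ is a fixed diffeomorphism, phase-critical points of $f_{\lambda}|_{S^{3}}$ lying off the circle $\{v=0\}$ correspond bijectively, and with preserved (non-)degeneracy, to phase-critical points of $G_{\lambda}$ on $\mathring{D}\times S^{1}$. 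Writing $f_{\lambda}(u,v)=\sum_{k}u^{s-k}(-\lambda)^{k}Q_{k}(v,\overline{v})$ with $Q_{k}$ independent of $\lambda$, and substituting $v=\rho\me^{\rmi t}$, $\rho=\sqrt{1-|u|^{2}}$, I would compare $G_{\lambda}$ with $g_{\lambda}(u,t)=f_{\lambda}(u,\me^{\rmi t})$, the only difference being the factor $\rho$ in place of $1$.

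The key computation is the rescaling $u=\lambda w$. Since $u_{\lambda,j}=\lambda u_{1,j}$, one has $g_{\lambda}(\lambda w,t)=\lambda^{s}g_{1}(w,t)$, hence $\arg g_{\lambda}=\arg g_{1}$, and the $n$ non-degenerate phase-critical points of $g_{\lambda}$ sit at the fixed $w$-locations of those of $g_{1}$. For $G_{\lambda}$, on the compact region $|w|\le C$ one has $\rho=1+O(\lambda^{2})$, so $G_{\lambda}(\lambda w,\rho\me^{\rmi t})=\lambda^{s}\bigl(g_{1}(w,t)+\lambda^{2}R_{\lambda}(w,t)\bigr)$ with $R_{\lambda}$ bounded in $C^{\infty}$ uniformly in small $\lambda$. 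Therefore $\arg G_{\lambda}\to\arg g_{1}$ in $C^{2}_{\mathrm{loc}}$ away from the zeros. Applying the implicit function theorem to $\nabla\arg=0$ at each of the $n$ non-degenerate critical points of $\arg g_{1}$ then produces, for $\lambda$ small, exactly $n$ phase-critical points of $\arg G_{\lambda}$ in $\{|w|\le C\}$, each non-degenerate, and none elsewhere in that region, where $\nabla\arg g_{1}\ne0$.

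It remains to rule out phase-critical points with $|u|$ bounded below, and this a priori localization is the step I expect to be the main obstacle. Away from $u=0$ the leading term dominates: for $|u|\ge C\lambda$ one has $f_{\lambda}=u^{s}\bigl(1+O(\lambda/|u|)\bigr)$, so $\arg(f_{\lambda}|_{S^{3}})=s\arg u+O(\lambda)$ together with its first derivatives. Since $\arg u\colon S^{3}\setminus\{u=0\}\to S^{1}$ is a submersion with $|\nabla\arg u|$ bounded below on $\{|u|\ge C\lambda\}$ (this also handles the circle $\{v=0\}$, where $u$ itself fails to be a submersion but its argument does not), the perturbed map has no critical points there for small $\lambda$. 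The delicate points are the uniformity of these estimates as the region $|w|\le\delta/\lambda$ grows, the transition across $|u|\sim\lambda$, and controlling the metric distortion introduced by $\Psi$; all are handled by the three-regime split $|w|\le C$, $C\lambda\le|u|<\delta$, $|u|\ge\delta$. Combining the three regimes, $f_{\lambda}|_{S^{3}}$ has exactly $n$ phase-critical points, all near the link, which establishes the first assertion.

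Finally, when $n=0$ the map $\arg(f_{\lambda}|_{S^{3}})\colon S^{3}\setminus L\to S^{1}$ is a submersion. To upgrade this to a fibration I would invoke the transversality supplied by Corollary \ref{transverse}: since $0$ is a regular value of $f_{\lambda}|_{S^{3}}$ and the intersection with $S^{3}$ is transverse, near $L$ the function $f_{\lambda}$ is, in suitable coordinates, the projection to a normal complex coordinate, so $\arg f_{\lambda}$ exhibits the standard open-book collar behaviour there and admits no critical points approaching $L$. One then lifts the unit vector field on $S^{1}$ to a vector field on $S^{3}\setminus L$ via the submersion, matched to the collar model near $L$, whose flow trivialises the map; equivalently, away from a collar the map is a proper submersion and Ehresmann's theorem applies. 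This is exactly the argument of \cite{us:2016lemniscate}, which goes through verbatim and yields that $\arg(f_{a,b})$ fibres $S^{3}\setminus L$ over $S^{1}$.
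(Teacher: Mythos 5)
You should first note a structural point: this paper never proves Lemma \ref{crit} itself. It states explicitly that the result ``remains true in this most general setting without changing the proof given in \cite{us:2016lemniscate}'' --- the lemma is imported by citation, so there is no in-paper argument to compare against line by line. Your proposal therefore does something the paper does not: it supplies a self-contained proof, and the one you give is essentially correct and is built from exactly the ingredients this paper already uses in Section \ref{proof} --- the map $\Psi$, the scaling identity $u_{\lambda,j}=\lambda u_{1,j}$ (equivalently $g_{\lambda}(\lambda w,t)=\lambda^{s}g_{1}(w,t)$), and a ``small $\lambda$'' localization in the spirit of Lemmas \ref{disjoint}--\ref{intersect}. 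The three-regime split is the right skeleton: the rescaled inner region where $\arg G_{\lambda}$ is a $C^{2}$-small perturbation of $\arg g_{1}$ (so the implicit function theorem reproduces the $n$ non-degenerate critical points), and the outer regions where the leading term $u^{s}$ dominates so that $\arg f_{\lambda}$ is a small relative perturbation of $s\arg u$, whose gradient on $S^{3}$ is bounded below by $1/|u|$; this correctly disposes of the circle $\{v=0\}$ as well. Two points you flag as delicate deserve to be made explicit if this were written out in full. First, in the inner region your exclusion of extra critical points ``where $\nabla\arg g_{1}\neq 0$'' must also cover shrinking neighbourhoods of the zero set of $g_{1}$, where $\arg g_{1}$ is undefined; this is fine because the roots are simple, so $\partial_{w}P_{\lambda}$ is bounded below and $|P_{\lambda}|\to 0$ there, forcing $|d\arg P_{\lambda}|\to\infty$ uniformly in small $\lambda$ --- but it is a separate estimate, not a consequence of the compactness argument on the complement. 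Second, for $n=0$ you are right that Ehresmann's theorem cannot be applied directly (the map $\arg f_{\lambda}:S^{3}\backslash L\to S^{1}$ is not proper), and your fix --- the tubular-neighbourhood model near $L$ supplied by Corollary \ref{transverse}, with Ehresmann applied only on the compact complement of an open collar --- is the standard and correct repair. In short: your route reconstructs the perturbative proof that the paper delegates to \cite{us:2016lemniscate}; what it buys is independence from that reference, at the cost of having to carry out the uniform estimates that the citation hides.
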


We call a braid on $s$ strands strictly homogeneous if it is homogeneous and every generator $\sigma_{1},\ldots,\sigma_{s-1}$ appears at least once in its braid word, either with a positive or a negative sign.
Lemma \ref{crit} can be used to show the following result.

\begin{thm}
\label{fibration} Let $B$ be a strictly homogeneous braid on $s$ strands and $L$ be its closure. Then $L$ is fibred and there exists a semiholomorphic polynomial $f:\mathbb{C}^{2}\to \mathbb{C}$ s.t. $\deg_{u} f=s$, $f^{-1}(0)\cap S^{3}=L$  and $\arg(f)$ is a fibration of $S^{3}\backslash L$ over $S^1$.
\end{thm}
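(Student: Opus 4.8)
The plan is to combine Stallings' theorem with Lemma \ref{crit}. First, since $B$ is strictly homogeneous, every generator $\sigma_{1},\ldots,\sigma_{s-1}$ appears at least once and always with the same sign, so in particular $B$ is homogeneous and by Stallings' theorem on the fibredness of homogeneous braid closures the link $L$ is fibred; this settles the first assertion (and strict homogeneity rules out a missing generator, which would make $L$ split and hence non-fibred). The construction of Section \ref{main} applied to any Fourier parametrisation of $B$ produces $f$ with $f^{-1}(0)\cap S^{3}=L$ by Theorem \ref{thmintro}, and since $g_{a,b}$ is a product of exactly $s$ linear factors in $u$ we automatically have $\deg_{u}f=s$. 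Everything therefore reduces, via Lemma \ref{crit}, to producing a parametrisation for which $g_{a,b}$ has \emph{no} phase-critical points, i.e.\ the case $n=0$; Lemma \ref{crit} then delivers that $\arg(f_{a,b})\colon S^{3}\setminus L\to S^{1}$ is a fibration for small $a,b$.

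Next I would make the phase-critical condition explicit. Away from its zeros we write $\log g_{a,b}=\log|g_{a,b}|+\rmi\arg(g_{a,b})$, and since $g_{a,b}$ is holomorphic in $u$ the Cauchy--Riemann equations give $\partial_{\re u}\arg(g_{a,b})=\im(g'/g_{a,b})$ and $\partial_{\im u}\arg(g_{a,b})=\re(g'/g_{a,b})$, where $g'=\partial_{u}g_{a,b}$. Both $u$-derivatives of the phase vanish exactly when $\partial_{u}g_{a,b}=0$, i.e.\ when $\sum_{k}(u-\beta_{k}(t))^{-1}=0$ with $\beta_{k}(t)=aX_{k}(t)+\rmi bY_{k}(t)$. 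The remaining equation $\partial_{t}\arg(g_{a,b})=0$ reads $\im\bigl(\sum_{k}\beta_{k}'(t)/(u-\beta_{k}(t))\bigr)=0$, so a phase-critical point is precisely a common solution of these two equations. Because $g_{a,b}(\lambda u,t)=\lambda^{s}g_{a_{1},b_{1}}(u,t)$ under $a=\lambda a_{1}$, $b=\lambda b_{1}$, the phase and hence this critical locus is independent of $\lambda$, so it suffices to analyse the fixed configuration at $\lambda=1$.

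The heart of the argument, and the step I expect to be hardest, is to choose the parametrisation so that these two equations have no common solution, and here strict homogeneity is used twice. First, I must ensure that the interpolated $x$-shadow $B'$ of Section \ref{algo} does not destroy homogeneity: the trigonometric interpolation for $F_{C}$ can introduce extra, sign-cancelling crossings, so I would choose the $F_{C}$ (equivalently the embedding functions $D_{C,j}$) so that $B'$ reproduces the homogeneous diagram of $B$ without spurious mixed-sign crossings. Second, with the over/under data encoded by $G_{C}$ respecting the fixed sign of each $\sigma_{i}$, every pair of strands that meets in the shadow crosses with a consistent orientation; I would use this to show that along the critical locus $\{\partial_{u}g_{a,b}=0\}$ the phase $\arg(g_{a,b})$ is strictly monotone in $t$, so that $\partial_{t}\arg(g_{a,b})\neq0$ there and no common solution exists. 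Geometrically this is the polynomial incarnation of Stallings' fibration: the level sets $\arg(g_{a,b})=\text{const}$ are copies of the Bennequin fibre surface, and the consistent crossing signs prevent the saddle-type degenerations that a critical point of the open book would create.

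Having established $n=0$, Lemma \ref{crit} gives that $\arg(f_{a,b})$ is a fibration of $S^{3}\setminus L$ over $S^{1}$ for $a,b$ small, while Theorem \ref{thmintro} guarantees $f_{a,b}^{-1}(0)\cap S^{3}=L$ and the factorisation gives $\deg_{u}f_{a,b}=s$, completing the proof. The delicate points to watch are the control of the auxiliary shadow $B'$ and the \emph{global} verification that the two critical-point equations are genuinely incompatible, rather than merely locally so near the individual crossings.
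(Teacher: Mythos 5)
Your reduction to Lemma \ref{crit}, your explicit description of the phase-critical points as common solutions of $\partial_u g_{a,b}=0$ and $\partial_t\arg(g_{a,b})=0$, the scaling remark, and the treatment of fibredness and $\deg_u f=s$ are all correct and agree with the paper. The gap is in the step you yourself flag as the heart of the argument: you assert that once the shadow $B'$ and the signs encoded by the $G_C$ reproduce the homogeneous diagram of $B$, the phase $\arg(g_{a,b})$ is strictly monotone in $t$ along the critical locus $\{\partial_u g_{a,b}=0\}$. This does not follow. The crossing pattern of the root strands is a purely topological property of the parametrisation, whereas non-vanishing of $\partial_t\arg(g_{a,b})$ at the critical points is an analytic property of the critical \emph{values} $t\mapsto g_{a,b}(u_c(t),t)$, and the former does not control the latter. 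Already for $\sigma_1^3$ on two strands the unique critical value is $-\left(\beta_1(t)-\beta_2(t)\right)^2/4$, and one can wiggle the parametrisation, keeping exactly three positive transverse crossings in the diagram, so that $\arg(\beta_1-\beta_2)$ is non-monotone; this creates phase-critical points even though the diagram is homogeneous. Consistent crossing signs make monotone critical values \emph{possible}, not automatic, and for $s\geq 3$ the critical values are not explicit functions of the roots, so the implication cannot be checked directly.

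The paper closes exactly this gap by arguing in the opposite direction: instead of choosing the roots and trying to control the critical values, it first parametrises the braid of critical values and then lifts back to roots. Two external inputs make this possible: (i) Rudolph's computation that $B=\sigma_{i_1}^{\varepsilon_{i_1}}\cdots\sigma_{i_\ell}^{\varepsilon_{i_\ell}}$ admits a parametrisation whose critical braid is isotopic to $A_{i_1,\varepsilon_{i_1}}\cdots A_{i_\ell,\varepsilon_{i_\ell}}$ with $A_{i,\varepsilon}=(\sigma_1\cdots\sigma_{i-1})\sigma_i^{2\varepsilon}(\sigma_1\cdots\sigma_{i-1})^{-1}$, and (ii) the Beardon--Carne--Ng theorem that monic degree-$s$ polynomials with disjoint roots and disjoint non-zero critical values fiber, via the critical-value map $\theta$, over the configuration space of critical values, so that homotopies of critical-value loops lift and braid-isotopic critical braids come from braid-isotopic root braids. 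Strict homogeneity is then used where it really bites: since every generator appears, with a single sign, each critical-value strand can be parametrised with strictly monotone argument (if some $\sigma_i$ were missing, the $i$-th critical value would be a $2\pi$-periodic loop of winding number zero, whose argument-derivative must vanish somewhere, forcing a phase-critical point). Lifting this monotone critical braid through $\theta$ yields a parametrisation of a braid isotopic to $B$ with no phase-critical points, and a density-plus-openness argument (critical values depend differentiably on the coefficients) converts it into a Fourier parametrisation of the form of Eq. (\ref{eq:fourierpara}). Without these lifting tools, or a substitute for them, your step ``consistent signs $\Rightarrow$ monotone phase on the critical locus'' remains an unproved, and as stated false, implication.
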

\begin{proof} By Lemma \ref{crit} it is enough to show that $B$ has a Fourier parametrisation such that $g_{a,b}$ does not have any phase-critical points. Then for any $\lambda>0$ the function $g_{\lambda a,\lambda b}$ will not have any phase-critical points either. It follows from Theorem \ref{thmintro} that if we choose $\lambda$ small enough, the resulting $f_{a,b}$ will have a zero level set on $S^{3}$ that is ambient isotopic to $L$.
 
First note that the phase-critical points of $g_{a,b}$ are exactly those points  in $\mathbb{C}\times S^1$, where $\partial_u g_{a,b}=0$ and $\partial_t \arg(g_{a,b})=0$. 
Now if $\bigcup_{j=0}^{s-1}(X_{j}(t),Y_{j}(t),t)$ is any parametrisation of $B$ (not necessarily of the form of Eq. (\ref{eq:fourierpara})), we can define $g_{a,b}$ as in Eq. (\ref{eq:braidpoly}). We assume that that $g_{a,b}$ has $s-1$ disjoint non-zero critical values for every $t$. Then the critical values together with a strand $(0,0,t)$, $t\in[0,2\pi]$ form a braid in $\mathbb{C}\times S^1$ on $s$ strands. We call this the critical braid.

Note that the set $W$ of monic polynomials of degree $s$ with disjoint roots and disjoint critical values fibers over $V=\{(z_{1},z_{2},\ldots,z_{s-1})\in\mathbb{C}^{s-1}|z_{i}\neq 0 \text{ and } z_{i}\neq z_{j} \text{ if }i\neq j\}/S_{s-1}$ via the map $\theta$ that sends each such polynomial to its set of critical values \cite{bcn:2002critical}. Let $H$ be a homotopy of loops in $S^{1}\times V$ such that $H$ starts at a loop $x\in S^{1}\times V$ which is in the image of $(\text{id},\theta):S^{1}\times W\rightarrow S^{1}\times V$, where $\text{id}$ is the identity map. Say $x=(\text{id},\theta)(y)$ for some $y\in S^{1}\times W$. Then $H$ lifts to a homotopy of loops in $S^{1}\times W$ starting at $y$. Thus if two critical braids $B_{1}$ and $B_{2}$ are braid isotopic and $p_{t,1}$ is a 1-parameter ($t\in S^{1}$) family of monic polynomials with disjoint roots and critical values such that $(\text{id},\theta)(p_{t,1})=B_{1}$, then the zeros of $p_{t,1}$ form a closed braid in $S^{1}\times\mathbb{C}$ which is isotopic to the braid formed by the roots of some other family of polynomials $p_{t,2}$ with $(\text{id},\theta)(p_{t,2})=B_{2}$.

Let $B=\sigma_{i_{1}}^{\varepsilon_{i_{1}}}\sigma_{i_{2}}^{\varepsilon_{i_{2}}}\ldots\sigma_{i_{\ell}}^{\epsilon_{i_{\ell}}}$, $\varepsilon_{i}\in\{\pm1\}$. Then there exists a parametrisation $P$ of $B$ such that if $g_{a,b}$ is defined using $P$, then the critical braid of $g_{a,b}$ can be isotoped to the braid $A_{i_{1},\varepsilon_{i_{1}}}A_{i_{2},\varepsilon_{i_{2}}}\ldots A_{i_{\ell},\varepsilon_{i_{\ell}}}$, where $A_{i,\varepsilon}=(\sigma_{1}\sigma_{2}\ldots\sigma_{i-1})\sigma_{i}^{2\varepsilon}(\sigma_{1}\sigma_{2}\ldots\sigma_{i-1})^{-1}$ \cite{rudolph:2001some}.

For a strictly homogeneous braid $B$, we can write down a parametrisation $\bigcup_{j=0}^{s-1}(X'_{j}(t),Y'_{j}(t),t)$ of the braid of critical values such that $\partial_t \arg(X'_{j}+iY'_{j})>0$ for all $t$ if $\varepsilon_{j}$ is positive, $\partial_t \arg(X'_{j}+iY'_{j})<0$ for all $t$ if $\varepsilon_{j}$ is negative and such that this critical braid is braid isotopic to the critical braid derived from the parametrisation $P$. 
Hence the paramesiation $\bigcup_{j=0}^{s-1}(X'_{j}(t),Y'_{j}(t),t)$ of the critical braid lifts to a parametrisation $(X_{j}(t),Y_{j}(t),t)$ of a braid which is isotopic to $B$ and the function $g_{a,b}$ corresponding to this parametrisation does not have any phase-critical points. 
Since the roots are disjoint and the critical points are disjoint, the dependence of the critical values on the coefficients is in fact differentiable, so a small perturbation of the parametrisation will not introduce any more phase-critical points.
Since trigonometric polynomials are dense in the set of continuous, $2\pi$-periodic real functions, we can approximate the parametrisation $(X_{j}(t),Y_{j}(t),t)$ by a Fourier parametrisation of the form of Eq. (\ref{eq:fourierpara}) such that the corresponding $g_{a,b}$ does not have any phase-critical points either.
\end{proof}

Stallings \cite{stallings:1978constructions} already showed that closures of homogeneous braids are fibred. Our theorem specifies this by providing a certain form of the fibration map.
This proof shows in fact more. Let $B=\sigma_{i_{1}}^{\varepsilon_{i_{1}}}\sigma_{i_{2}}^{\varepsilon_{i_{2}}}\ldots\sigma_{i_{\ell}}^{\varepsilon_{i_{\ell}}}$ and let $\beta(B)$ denote 
\begin{align}
\beta(B)=&\sum_{i=1}^{s-1}\left|\{j\in\{1,2,\ldots,\ell-1\}:\exists\ k\in\{1,2,\ldots,\ell-1\} \text{ s.t. } i_{j}=i_{j+k\text{ mod }\ell}=i,\right.\nonumber\\ 
&\left.i_{j+m\text{ mod }\ell}\neq i \text{ for all }m<k \text{ and }\varepsilon_{i_{j}}\varepsilon_{i_{j+k\text{ mod }\ell}}=-1\}\right|\nonumber\\
&+\left|\{j\in\{1,2,\ldots,s-1:\text{ There is no }k\text{ s.t. }i_{k}=j\}\}\right|.\end{align}
Note that for strictly homogeneous braids, it is $\beta(B)=0$. The $\beta$-value measures how far a braid is from being strictly homogeneous.
Together with Lemma \ref{crit} the proof of Theorem \ref{fibration} shows that for any braid $B$ on $s$ strands, there exists a semiholomorphic polynomial $f:\mathbb{C}^{2}\rightarrow\mathbb{C}$ such that $f^{-1}(0)\cap S^{3}$ is the closure of $B$, $\deg_{u}f=s$ and $\arg(f)$ has exactly $\beta(B)$ critical points. 

It can be arranged that $\arg(f)$ is a smooth circle-valued Morse function on $S^{3}\backslash L$. Such a function is called \emph{regular} if there is a diffeomorphism $\varphi$ of the union of $|\mathcal{C}|$ solid tori, such that the composition of $\varphi$ and $\arg(f)$ applied to a tubular neighbourhood of the link is just the projection map $L\times(\mathring{D}\backslash\{0\}) \rightarrow S^{1}:(x,y)\mapsto y/|y|$.
Since $\arg(f)$ is the argument of a semiholomorphic polynomial, it is regular.

The Morse-Novikov number $\mathcal{MN}(L)$ of a link $L$ is the minimal number of critical points of all smooth, regular circle-valued Morse functions \cite{prw:mn}. Thus $\mathcal{MN}(L)=0$ if and only if $L$ is fibred.

The discussion above then implies 
\begin{equation}
\mathcal{MN}(L)\leq \beta(B).
\end{equation} 
We should point out that this bound is a strict inequality for some knots. For example the knot $8_{20}$ is known to be fibered and has thus $\mathcal{MN}(L)$ equal to zero. However, it is not the closure of a homogeneous braid and hence $\beta(B)>0$ for all braids $B$ that close to $8_{20}$ \cite{bell:2012homogeneous}.

The proof of  Theorem \ref{fibration} follows ideas from \cite{rudolph:2001some} very closely, so we believe that we might not be the first to derive this bound. What we have shown here is that the bound can be realised by the argument of a semiholomorphic polynomial.

\section*{Acknowledgments}
We are grateful to Gareth Alexander, Mark Bell, David Foster, Daniel Peralta-Salas and Jonathan Robbins for discussions and comments.
This work is funded by the Leverhulme Trust Research Programme Grant RP2013- K-009, SPOCK: Scientific Properties Of Complex Knots.

\end{document}